\documentclass[12pt,twoside]{amsart}
\usepackage{amsmath,amssymb}    
\usepackage{url}                
\usepackage{graphicx}           
\usepackage{amsthm}
\usepackage{fullpage}
\usepackage{hyperref}
\theoremstyle{definition}
\numberwithin{equation}{section}
\newtheorem{theorem}[equation]{Theorem}
\newtheorem{lemma}[equation]{Lemma}
\newtheorem{proposition}[equation]{Proposition}
\newtheorem{corollary}[equation]{Corollary}

\newtheorem{definition}[equation]{Definition}
\newtheorem{example}[equation]{Example}
\newtheorem{question}[equation]{Question}
\newtheorem{conjecture}[equation]{Conjecture}
\newtheorem{remark}[equation]{Remark}

\newcommand{\OO}{\mathcal{O}}

\newcommand{\Hom}{\text{Hom}}


\begin{document}

\begin{abstract}
We study Poisson traces of the structure algebra A of an affine Poisson variety $X$ defined over a field of characteristic $p$.
According to \href{http://arxiv.org/abs/0908.3868v4}{arXiv:0908.3868}, the dual space $HP_0(A)$ to the space of Poisson traces arises as the space of coinvariants associated to a certain $D$-module $M(X)$ on X.
If $X$ has finitely many symplectic leaves and the ground field has characteristic zero, then $M(X)$ is holonomic, and thus $HP_0(A)$ is finite dimensional. However, in characteristic $p$,
the dimension of $HP_0(A)$ is typically infinite.  Our main results are complete computations of $HP_0(A)$ for sufficiently large $p$ when $X$ is 1) a quasi-homogeneous isolated surface singularity in the three-dimensional space, 2) a quotient singularity $V/G$, for a symplectic vector space $V$ by a finite subgroup $G$ in $Sp(V)$, and 3) a symmetric power of a symplectic vector space or a Kleinian singularity.  In each case, there is a finite nonnegative grading, and we compute explicitly the Hilbert series. The proofs are based on the theory of $D$-modules in positive characteristic. \end{abstract}

\date{}
\newcommand{\h}{{\mathfrak{h}}}
\newcommand{\CC}{{\Bbb C}}

\title{Poisson traces in positive characteristic}

\author{Yongyi Chen, Pavel Etingof, David Jordan, and Michael Zhang}
\maketitle

\section{Introduction}

Let $A$ be a Poisson algebra over a field $F$.
A {\it Poisson trace} on $A$ is a linear functional $T: A\to F$
such $T(\lbrace{a,b\rbrace})=0$ for $a,b\in A$, i.e., a Lie algebra
character of $A$.  Thus, the space ${\rm Tr}(A)$ of Poisson traces 
is dual to the space $HP_0(A)=A/\lbrace{ A,A\rbrace}$, the {\it
zeroth Poisson homology} of $A$. The space $HP_0(A)$ is an interesting and 
important invariant of $A$. 

In characteristic zero, the space $HP_0(A)$ is 
now reasonably well understood in a number of cases. 
For instance, Alev and Lambre showed in \cite{AL} 
that if $A$ is the algebra of functions 
on a quasihomogeneous isolated surface singularity, 
then $HP_0(A)$ coincides with the Jacobi ring of $A$. 
Later, in \cite{ES1}, T. Schedler and the second author 
determined $HP_0(A)$ for symmetric powers of such surfaces. Also, 
it is easy to show that if $A$ is the algebra of functions
on an irreducible affine symplectic variety $X$
of dimension $2d$, then $HP_0(A)=H^{2d}(X,F)$, 
and the paper \cite{ES2} generalizes this result to symmetric 
powers of symplectic varieties. Finally, the paper \cite{ES3}
uses the theory of ${\mathcal D}$-modules to study the space $HP_0(A)$
in the case when $A$ is the algebra ${\mathcal O}_X$ of functions 
on any affine Poisson variety $X$ over a field of characteristic zero. 
In particular, it shows that if $X$ has finitely many symplectic leaves, 
then the space $HP_0(A)$ is finite dimensional. 
This theory can also be used to calculate $HP_0(A)$ using a computer 
in the case of quotient singularities (see \cite{EGPRS}). 

The goal of this paper is to generalize
some of these results to positive characteristic. 
More precisely, we study Poisson traces 
for Poisson algebras $A_p$ 
obtained by reduction modulo primes $p$ of a finitely 
generated Poisson algebra $A$ in characteristic zero, 
for sufficiently large $p$. In this case, the space $HP_0(A_p)$ 
is typically infinite dimensional, 
but if $A$ is positively graded, then so is the space $HP_0(A_p)$, 
and one may ask for its Hilbert series, which 
converges to a rational function. 

Our first main result is the computation of 
this Hilbert series for a quasihomogeneous isolated surface singularity
in $\Bbb A^3$. Our second main result is the computation of the space 
$HP_0(A_p)$ and its Hilbert series when $A$ is the algebra of invariants in a polynomial algebra 
under a finite group (i.e., for a quotient singularity), in terms of 
``characteristic zero data''. This includes the case of symmetric powers of symplectic vector spaces
and Kleinian singularities, when we get a completely explicit answer.
The main results are proved using the theory of ${\mathcal D}$-modules 
in positive characteristic. 
We also discuss the behavior of the Hilbert series for small $p$. 

The organization of the paper is as follows. 
In Section 2 we discuss the basics on reduction to positive characteristic, 
and give some general results. In Section 3, we state our main theorems 
on the Hilbert series of $HP_0$ for isolated 
quasihomogeneous surface singularities and 
for quotient singularities in sufficiently large 
positive characteristic, 
and give a number of examples, including cones of smooth projective curves and 
symmetric powers of symplectic vector spaces and Kleinian singularities. 
In Section 4, we prove the first main theorem. 
Finally, in Section 5, we discuss some conjectures and results on 
the behavior of $HP_0$ for small $p$. 

{\bf Acknowledgments.} The authors are grateful to Roman Bezrukavnikov and 
Travis Schedler for useful discussions. In particular, we are grateful to Roman Bezrukavnikov 
for explanations regarding Lemma \ref{fg} and Example \ref{counterex}. 
The work of P.E. was partially supported by the NSF grant DMS-1000113. The work of D.J. was supported by the NSF grant DMS-1103778.  Y.C. and M.Z. 
are grateful to the PRIMES program at the MIT Mathematics Department, where this research was done. 

\section{Preliminaries}

\subsection{Reduction modulo $p$}\label{redu}

Let $F$ be a number field. Let $X$ be an affine scheme 
of finite type defined over $F$, and $A={\mathcal O}_X$ be the corresponding 
$F$-algebra. We would like to talk about reduction of $A$ 
and $X$ modulo primes. 

To this end, fix a finitely generated subring $R\subset F$ 
(for simplicity we will assume that $R$ generates $F$ as a field), and 
a finitely generated $R$-subalgebra $A_R\subset A$ which spans $A$ over $F$. 
Clearly, this is always possible (just choose a set of generators 
$a_1,...,a_n$ for $A$ over $F$, and set $A_R=R[a_1,...,a_n]\subset A$).    

For every rational prime $p$ let ${\rm Spec}(R)_p$ be the set of primes of $R$ projecting to $p$, 
i.e., of prime ideals ${\mathfrak{p}}\subset R$ such that $R/{\mathfrak{p}}$ 
is a finite field of characteristic $p$. This set is finite and nonempty for almost all $p$. 
Let $A_{\mathfrak{p}}:=A/{\mathfrak{p}}A$ be the corresponding reduction
(which is an algebra over the field $R/{\mathfrak{p}}=\Bbb F_q$), and 
let $X_{\mathfrak{p}}={\rm Spec}(A_{\mathfrak{p}})$. 

We note that this procedure depends on the choices of $R$ and $A_R$ 
in a very mild way. Namely, if $(R,A_R)$ and $(R',A_{R'})$ are two such choices, then 
almost all of the reductions $A_{\mathfrak{p}}$ are canonically the same for both cases. 

\begin{example}
What we will do is already interesting in the simplest case $F=\Bbb Q$ and $R=\Bbb Z$. 
In this case for any prime $p$ we have the usual reductions 
$A_p$, $X_p$ of $A$, $X$ modulo $p$. 
\end{example}

We will also need to talk about reductions of ${\mathcal O}$-modules and ${\mathcal D}$-modules. 
Suppose $M$ is a coherent ${\mathcal O}$-module or ${\mathcal D}$-module 
on $X$. Then we can define the reductions $M_{\mathfrak{p}}$, 
using an $A_R$-lattice $M_R$ in $M$, and again for any two choices of $R,A_R,M_R$,
almost all the reductions are canonically the same.  

\begin{remark}
If $X$ is singular, we pick a closed embedding $i: X\to V$ into a smooth affine variety, and 
use the functor $i_*$ to realize ${\mathcal D}$-modules on $X$ as ${\mathcal D}$-modules on $V$ scheme-theoretically 
supported on $X$. Then, by reduction $M_{\mathfrak{p}}$ 
of a ${\mathcal D}$-module $M$ on $X$ we mean the ${\mathcal D}$-module $i^!(i_*M)_{\mathfrak{p}}$. Since 
reduction modulo $p$ commutes with the functors $i_*,i^!$ for closed embeddings of smooth affine varieties, 
this procedure is well defined. 
\end{remark}

Let $X$ be an affine scheme of finite type over $F$ and $M$ a coherent right ${\mathcal D}$-module on $X$. 
Consider the reductions $M_{\mathfrak{p}}$ and their underived 
direct images $\pi_0(M_{\mathfrak{p}})$ under the map $\pi: X\to {\rm pt}$ of $X$ to the point.

\begin{lemma}\label{fg} 
(i) $\pi_0(M_{\mathfrak{p}})$ is a finitely generated module 
over ${\mathcal O}_{X,{\mathfrak{p}}}^p$.

(ii) If $M$ is holonomic, then there is a constant $c=c(M)$ such that 
for almost all $p$, the fibers of the module $\pi_0(M_{\mathfrak{p}})$ over 
${\mathcal O}_{X,\mathfrak{p}}^p$ have dimension $\le c(M)$. 
\end{lemma}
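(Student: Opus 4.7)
The plan exploits the large center of $D_{X_\mathfrak{p}}$ in characteristic $p$. For $X$ smooth (the singular case reducing via an embedding $i:X\hookrightarrow V$ into smooth $V$ as in the preceding remark), $D_{X_\mathfrak{p}}$ is module-finite over its center $Z(D_{X_\mathfrak{p}})\cong \mathcal{O}_{T^*X^{(1)}_\mathfrak{p}}$, in fact an Azumaya algebra of rank $p^{2\dim X}$. This center contains both $\mathcal{O}_{X_\mathfrak{p}}^p$ and the $p$-curvatures $\psi(\xi):=\xi^p-\xi^{[p]}$ for $\xi \in T_X$, which together generate the ideal of the zero section in $T^*X^{(1)}_\mathfrak{p}$.

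For (i): by coherence, $M_\mathfrak{p}$ is finitely generated over $D_{X_\mathfrak{p}}$, and hence over $Z(D_{X_\mathfrak{p}})$. On the quotient $\pi_0(M_\mathfrak{p})=M_\mathfrak{p}/M_\mathfrak{p}T_X$, every derivation $\xi$ acts as zero by construction, and so does $\xi^p$, via the identity $m\cdot \xi^p=(m\cdot \xi^{p-1})\cdot \xi\in M_\mathfrak{p}T_X$. Since $\xi^{[p]}$ is again a derivation, the central element $\psi(\xi)$ also acts as zero. Thus the $Z(D_{X_\mathfrak{p}})$-action on $\pi_0(M_\mathfrak{p})$ factors through $Z(D_{X_\mathfrak{p}})/(\psi)\cong \mathcal{O}_{X^{(1)}_\mathfrak{p}}=\mathcal{O}_{X_\mathfrak{p}}^p$, and finite generation over $\mathcal{O}_{X_\mathfrak{p}}^p$ follows.

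For (ii): I write $\pi_0(M_\mathfrak{p})=M_\mathfrak{p}\otimes_{D_{X_\mathfrak{p}}}\mathcal{O}_{X_\mathfrak{p}}$ and invoke Morita equivalence for the Azumaya algebra $D_{X_\mathfrak{p}}$ (étale-locally on $T^*X^{(1)}_\mathfrak{p}$): $M_\mathfrak{p}$ corresponds to a coherent sheaf $\widetilde M_\mathfrak{p}$ on $T^*X^{(1)}_\mathfrak{p}$, and the canonical left $D$-module $\mathcal{O}_{X_\mathfrak{p}}$, having zero $p$-curvature, corresponds by Cartier descent to $\mathcal{O}_{X^{(1)}_\mathfrak{p}}$---the structure sheaf of the zero section. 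Consequently,
\[
\pi_0(M_\mathfrak{p}) \;\cong\; \widetilde M_\mathfrak{p}\otimes_{\mathcal{O}_{T^*X^{(1)}_\mathfrak{p}}}\mathcal{O}_{X^{(1)}_\mathfrak{p}} \;=\; \widetilde M_\mathfrak{p}\big|_{X^{(1)}_\mathfrak{p}},
\]
so the fiber of $\pi_0(M_\mathfrak{p})$ at $\bar x\in X^{(1)}_\mathfrak{p}$ equals the fiber of $\widetilde M_\mathfrak{p}$ at $(\bar x,0)\in T^*X^{(1)}_\mathfrak{p}$. Choosing a good filtration on an $R$-lattice of $M$, its associated graded is $R$-flat and coherent on $T^*X_R$ with generic Hilbert polynomial recording the characteristic cycle of $M$---a characteristic-zero invariant whose support has dimension $\dim X$ by holonomicity. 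For almost all $\mathfrak{p}$ this Hilbert polynomial persists on $T^*X^{(1)}_\mathfrak{p}$, furnishing a $\mathfrak{p}$-independent bound on the multiplicities of $\widetilde M_\mathfrak{p}$ and hence on its fiber dimensions, which gives the constant $c(M)$.

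The main obstacle lies in (ii): one must cross the Morita/Azumaya bridge delicately so that no spurious factor of $p^{\dim X}$ contaminates the fiber bound (naively, $M_\mathfrak{p}$ viewed directly as $\mathcal{O}_{T^*X^{(1)}_\mathfrak{p}}$-module has fibers inflated by exactly this factor), and then upgrade the bound on characteristic-cycle multiplicities into a uniform, point-wise bound on fibers---e.g.\ via upper semicontinuity of fiber dimension combined with Hilbert-polynomial control on $\widetilde M_\mathfrak{p}$.
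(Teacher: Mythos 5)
Your argument for part (i) is essentially the paper's, repackaged: the paper shows $\pi_0(M_\mathfrak{p})$ is a quotient of $\mathcal{O}_{V,\mathfrak{p}}^n$ as a module over $\mathcal{O}_{V,\mathfrak{p}}^p$ after passing to an affine space $V\supset X$, and you phrase the same finiteness via the $p$-curvature elements $\psi(\xi)=\xi^p-\xi^{[p]}$ dying on coinvariants; both reduce the action of the center to $\mathcal{O}_{X_\mathfrak{p}}^p$. No substantive difference.

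Part (ii) is where you diverge, and where the gap lies. Your Morita-theoretic reformulation $\pi_0(M_\mathfrak{p})\cong \widetilde M_\mathfrak{p}|_{X^{(1)}_\mathfrak{p}}$ is correct and is a clean way to see that one fiber of $\pi_0(M_\mathfrak{p})$ is $p^{-\dim X}$ times one fiber of $M_\mathfrak{p}$ over the $p$-center. But the step ``the Hilbert polynomial of $\gr M$ persists on $T^*X^{(1)}_\mathfrak{p}$ and bounds the multiplicities, hence the fibers, of $\widetilde M_\mathfrak{p}$'' has two unresolved problems. First, $\gr(M_\mathfrak{p})$ lives on $T^*X_\mathfrak{p}$, while $\widetilde M_\mathfrak{p}$ lives on $T^*X^{(1)}_\mathfrak{p}$; passing between them is not a Frobenius pullback and not innocuous --- it is exactly the nontrivial comparison between the characteristic variety (from a good filtration) and the $p$-support (from the Azumaya structure), which you cannot assume. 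Second, even granting a bound on the multiplicities of $\widetilde M_\mathfrak{p}$ along the components of its support, multiplicities are \emph{generic} fiber dimensions; upper semicontinuity of fiber dimension says non-generic fibers can only be \emph{larger}, so it gives a bound in the wrong direction. You flag this yourself but offer no repair.

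The paper's route is more elementary and avoids both pitfalls. It observes that $M_\mathfrak{p}$ is generated over the $p$-center $Z(\mathcal D_{V,\mathfrak p})$ by the finite set $\{x^{a}\partial^{b}m_j : a_i,b_i\in[0,p-1]\}$, and since $\mathrm{GKdim}(M)=\dim V=n$, these span a subspace of $M_\mathfrak p$ of dimension $\le c\,p^n$ for $p$ large, a bound coming from characteristic zero. Because these elements generate over the center, their images span the fiber $(M_\mathfrak{p})_z$ at \emph{every} closed point $z$ of $\operatorname{Spec} Z$, not merely a generic one, so $\dim (M_\mathfrak{p})_z\le c\,p^n$ uniformly. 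The Azumaya factor $p^n$ then cancels exactly as you describe, yielding the pointwise bound $c$ on fibers of $\pi_0(M_\mathfrak p)$. In short: the paper gets a pointwise bound on fibers of $M_\mathfrak p$ directly from explicit generators and GK dimension, whereas your approach replaces that with a characteristic-cycle heuristic that neither transports cleanly to the $p$-center nor controls non-generic fibers. To make your route rigorous you would need a comparison theorem between $p$-support and characteristic variety with multiplicities, plus a flattening or generic-freeness argument to turn a multiplicity bound into a pointwise one; the paper's count-the-generators proof renders both unnecessary.
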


\begin{proof}
(i) Let $i: X\to V$ be a closed embedding of $X$ into an affine space, 
and $\pi': V\to {\rm pt}$ be the projection of $V$ to the point. 
Let ${\mathcal D}_{V,{\mathfrak{p}}}$ be the reduction 
of the algebra ${\mathcal D}_{V}$ of differential operators on $V$. 
Then $i_*M_{\mathfrak{p}}={\mathcal D}_{V,\mathfrak{p}}^n/J$, for some $n$, where $J$ is a submodule. 
Thus, 
$$
\pi_0'(i_*M_{\mathfrak{p}})=
i_*M_{\mathfrak{p}}
\otimes_{{\mathcal D}_{V,{\mathfrak{p}}}}{\mathcal O}_{V,{\mathfrak{p}}}=
{\mathcal O}_{V,{\mathfrak{p}}}^n/K,
$$ 
for some submodule $K$ over ${\mathcal O}_{V,{\mathfrak{p}}}^p$. So $\pi_0'(i_*M_{\mathfrak{p}})=\pi_0(M_{\mathfrak{p}})$ is a finitely generated 
module over ${\mathcal O}_{V,{\mathfrak{p}}}^p$. 
Moreover, if $I$ is the defining ideal of $X$, then $I^p$ 
is killed in this module, so 
$\pi_0(M_{\mathfrak{p}})$ is a finitely generated module over ${\mathcal O}_{X,\mathfrak{p}}^p$. 

(ii) As in (i), it suffices to assume that $X=V$ is an affine space, of some dimension $n$, with coordinates $x_1,...,x_n$. 
In this case, consider the fiber $(M_{\mathfrak{p}})_z$ over a geometric point 
$z$ of the Frobenius twist $(T^*V_{\mathfrak{p}})'$ (i.e., the spectrum of the $p$-center 
of ${\mathcal D}_{V,{\mathfrak{p}}}$). 

We claim that the dimension of $(M_{\mathfrak{p}})_z$ for large enough $p$ is at most $cp^n$, where $c=c(M)$
is some constant. Indeed, let $m_1,..,m_r$ be generators of $M_R$. Then 
$M_{\mathfrak{p}}$ is generated over the $p$-center of ${\mathcal D}_{V,{\mathfrak{p}}}$
by the elements $x_1^{a_1}...x_n^{a_n}\partial_1^{b_1}...\partial_n^{b_n}m_j$, where 
$a_i,b_i\in [0,p-1]$. Since the Gelfand-Kirillov dimension of $M$ is $n$, for large $p$ these elements span
a space of dimension $\le cp^n$, which implies our claim. 

Now, fibers of $\pi_0(M_{\mathfrak{p}})$ are coinvariants with respect to $\partial_1,...,\partial_n$ 
in $(M_{\mathfrak{p}})_z$ for particular points $z$. Since ${\mathcal D}_{V,{\mathfrak{p}}}$ is an Azumaya algebra, 
we see that the dimension of any fiber of $\pi_0(M_{\mathfrak{p}})$ does not exceed $c(M)$.   
\end{proof}

\subsection{The ${\mathcal D}$-module attached to a Poisson scheme}

Let us recall the main construction of \cite{ES3}, which attaches 
to an affine Poisson scheme $X$ of finite type over a field $F$ of characteristic zero
and a morphism $\phi: X\to Y$ to another scheme $Y$, 
a right ${\mathcal D}$-module $M_\phi(X)$ on $X$. 

First assume that $X$ is a smooth affine Poisson algebraic variety
over $F$. Let $Y$ be another affine variety, 
and $\phi: X\to Y$ be a morphism. 

\begin{definition}
The right ${\mathcal D}$-module $M_\phi(X)$ attached to $(X,\phi)$
is the quotient of the algebra ${\mathcal D}_X$ 
of differential operators on $X$ by the right ideal generated by 
Hamiltonian vector fields associated to regular functions pulled back from $Y$. 
\end{definition}

If $Y=X$ and $\phi={\rm Id}$, we denote $M_\phi(X)$ simply by $M(X)$. 

Now assume that $X,Y$ are affine schemes of finite type over $F$.
Let $i: X\to V$ be a closed embedding of $X$ into a smooth affine
variety $V$.

\begin{definition} 
  The right ${\mathcal D}$-module $M_\phi(X,i)$ on $V$ is the quotient of ${\mathcal D}_V$
  by the right ideal generated by regular functions on $V$ vanishing
  at $X$ and vector fields on $V$ tangent to $X$ and specializing at $X$ to 
Hamiltonian vector fields of functions pulled back from $Y$. 
The right ${\mathcal D}$-module $M_\phi(X)$ on $X$ is $i^!  M_\phi(X, i)$.
\end{definition}

One can check that this definition does not depend on $i$, up to a canonical isomorphism. 

\begin{theorem}\label{es3}(\cite{ES3}) (i) If $X$ has finitely many symplectic leaves and $\phi$ is finite then 
the ${\mathcal D}$-module $M_\phi(X)$ is holonomic. 

(ii) If $B=\OO_X,\ A=\phi^*(\OO_Y)$, and $\pi: X\to {\rm pt}$ is the projection, then the underived direct image 
$\pi_0(M_\phi(X))$ is naturally isomorphic to $B/\lbrace{A,B\rbrace}$. 
\end{theorem}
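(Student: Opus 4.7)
My plan is to handle part (ii) by a direct calculation of the underived direct image, and part (i) by a singular support computation using the symplectic foliation of $X$. In both parts I would first establish the smooth case; the singular case then follows because the functors $i_*,i^!$ for closed embeddings into smooth $V$ commute with $\pi_0$ and preserve holonomicity.

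For (ii), on smooth $X$ one has $\pi_0(M)=M/M\cdot T_X$ for any right $\mathcal{D}_X$-module $M$. Applied to $M_\phi(X)=\mathcal{D}_X/\sum_{f\in A}\xi_f\mathcal{D}_X$, this yields
$$\pi_0(M_\phi(X))=\mathcal{D}_X\Big/\Bigl(\sum_{f\in A}\xi_f\mathcal{D}_X+\mathcal{D}_X\cdot T_X\Bigr).$$
The canonical identification $\mathcal{D}_X/\mathcal{D}_X\cdot T_X\xrightarrow{\sim}\mathcal{O}_X=B$ (sending an operator to its constant term after moving all derivatives to the right) lets me compute the image of $\xi_f\cdot D$: commuting $\xi_f$ past $D=\sum_\alpha f_\alpha\partial^\alpha$ gives $\sum_\alpha f_\alpha\xi_f\partial^\alpha+\sum_\alpha\{f,f_\alpha\}\partial^\alpha$, and the only surviving contribution to $B$ is $\{f,f_0\}=\{f,\bar D\}$. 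Hence the image of $\sum_f\xi_f\mathcal{D}_X$ in $B$ is exactly $\{A,B\}$, giving $\pi_0(M_\phi(X))=B/\{A,B\}$. In the singular case the same computation is performed on $V$ for the right ideal defining $M_\phi(X,i)$: the contributions of the generators (functions vanishing on $X$, and tangent-to-$X$ vector fields restricting to $\xi_f$) combine, after quotienting by the ideal $I_X$ of $X$ in $\mathcal{O}_V$, to $\{A,B\}\subset B$.

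For (i), I need $\dim\operatorname{SS}(M_\phi(X))\leq\dim X$. On smooth $X$, the principal symbols of the generators $\xi_f$ are the fiberwise-linear functions $(x,p)\mapsto\langle p,\xi_f(x)\rangle$ on $T^*X$, so $\operatorname{SS}(M_\phi(X))$ is contained in
$$Z=\{(x,p)\in T^*X:\langle p,\xi_f(x)\rangle=0\text{ for all }f\in A\}.$$
The geometric claim is that $Z$ is a finite union of conormal varieties to symplectic leaves. For a leaf $L\subset X$, finiteness of $\phi$ forces $\phi|_L:L\to\phi(L)$ to have finite fibers, so $d(\phi|_L)$ is generically injective; hence $\{d(f|_L)(x):f\in A\}$ spans $T^*_xL$ for generic $x\in L$. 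Via the Poisson bivector $\pi_x:T^*_xX\to T_xX$, whose image is $T_xL$, this translates into $\{\xi_f(x):f\in A\}$ spanning $T_xL$, so the fiber of $Z$ over such $x$ is the conormal $(T_xL)^\perp$. The corresponding component of $Z$ is the closure of the conormal bundle $T^*_LX$, which is Lagrangian of dimension $\dim X$. Summing over the finitely many leaves, $\dim Z\leq\dim X$, establishing holonomicity; the singular case is reduced to the analogous statement on $V$ via $i_*$.

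The main obstacle is the geometric input in (i), that $\{\xi_f(x):f\in A\}$ spans $T_xL$ generically on each leaf. This combines the finiteness of $\phi|_L$ (inherited from $\phi$) with the degeneracy pattern of the Poisson bivector, and in the singular case one must also choose extensions of the Hamiltonian vector fields to $V$ that remain tangent to $X$. The rest is linear algebra and standard $\mathcal{D}$-module bookkeeping with $i_*,i^!$.
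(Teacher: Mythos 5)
The paper states Theorem~\ref{es3} as a citation from \cite{ES3} and gives no proof of its own, so there is no in-paper proof to compare against; I will therefore assess your argument on its own terms.

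Your proof of part (ii) is correct and is the expected computation: the identification $\mathcal{D}_X/\mathcal{D}_X T_X\cong B$, the commutator calculation showing the image of $\xi_f D$ in $B$ is $\{f,\bar D\}$, and the passage to a closed embedding $i:X\to V$ for singular $X$ are all fine.

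Part (i) has a genuine gap. You correctly establish $SS(M_\phi(X))\subset Z:=\{(x,p):\langle p,\xi_f(x)\rangle=0\ \forall f\in A\}$, and that over a dense open $U_L$ of each leaf $L$ the fiber $Z_x$ equals the conormal $(T_xL)^\perp$. But you then assert that ``the corresponding component of $Z$ is the closure of the conormal bundle $T^*_LX$'' and conclude $\dim Z\le\dim X$. This does not follow: $Z$ is closed in $T^*X$, and over the bad locus $S_L:=L\setminus U_L$, where $\{\xi_f(x)\}$ drops rank, the fibers $Z_x$ strictly contain $(T_xL)^\perp$ and may give rise to irreducible components of $Z$ that are \emph{not} contained in $\overline{T^*_LX}$. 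A priori $\dim(Z|_{S_L})\le\dim S_L+\max_{x\in S_L}\dim Z_x$, which can exceed $\dim X$; and $S_L$ is not a union of leaves, so one cannot simply recurse. Your use of the finiteness of $\phi$ (to get generic injectivity of $d(\phi|_L)$) is not strong enough to control this.

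The standard way to close the gap is a determinantal dimension bound that uses finiteness of $\phi$ more substantially. Via the identifications you set up, for $x\in L$ one has $\dim Z_x=\dim X-\operatorname{rank}\,d(\phi|_L)_x$. Set $R^L_r:=\{x\in L:\operatorname{rank}\,d(\phi|_L)_x\le r\}$. Since $\phi|_L$ is quasi-finite, it preserves dimension of closed subsets, and by generic smoothness (characteristic zero) the image $\phi(R^L_r)$ has dimension at most the generic rank of $d(\phi|_{R^L_r})$, which is $\le r$; hence $\dim R^L_r\le r$. Stratifying $Z|_L$ by the value $r$ of this rank, each stratum has dimension at most $r+(\dim X-r)=\dim X$, and since there are finitely many leaves, $\dim Z\le\dim X$. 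This is the point at which both finiteness of $\phi$ and finiteness of the leaf stratification are used essentially, and it is the missing step in your argument.
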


In particular, in the situation of (i), $\dim B/\lbrace{A,B\rbrace}<\infty$.  

\begin{remark} All these results extend to positive characteristic
(where in Theorem \ref{es3} (i), by a holonomic ${\mathcal D}$-module 
we mean a ${\mathcal D}$-module whose singular support is a Lagrangian subvariety).   
\end{remark}

\subsection{Poisson algebras and the zeroth Poisson homology}

Assume now that in the setting of Subsection \ref{redu}, $A_R$, and hence $A$ is a Poisson algebra, i.e., 
$X$ is a Poisson scheme. For each ${\mathfrak{p}}\in {\rm Spec}(R)_p$, 
consider the zeroth Poisson homology $HP_0(A_{\mathfrak{p}})=A_{\mathfrak{p}}/\lbrace {A_{\mathfrak{p}},A_{\mathfrak{p}}\rbrace}$. 
This is a finitely generated module over the $p$-th power algebra $A_{\mathfrak{p}}^p$
(since so is $A_{\mathfrak{p}}$, and $\lbrace {A_{\mathfrak{p}},A_{\mathfrak{p}}\rbrace}$ is a submodule).

\begin{proposition}\label{fiber}
Suppose that $X$ has finitely many symplectic leaves. 
Then for large enough $p$, the dimension of a 
fiber of the module $HP_0(A_{\mathfrak{p}})=A_{\mathfrak{p}}/\lbrace {A_{\mathfrak{p}},A_{\mathfrak{p}}\rbrace}$
over $A_{\mathfrak{p}}^p$ is bounded from above by a constant $c$ 
which depends only on the Poisson algebra $A$ (and not on $p$). 
\end{proposition}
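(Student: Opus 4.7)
The plan is to realize $HP_0(A_\mathfrak p)$ as the underived pushforward to a point of the mod-$\mathfrak p$ reduction of a fixed holonomic $\mathcal D$-module on $X$, and then invoke Lemma \ref{fg}(ii) directly. Morally, Theorem \ref{es3} handles the characteristic zero picture and Lemma \ref{fg}(ii) propagates the resulting uniform bound on fiber dimensions to almost all $\mathfrak p$.

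Concretely, I would apply the construction of Subsection 2.2 with $\phi = \mathrm{Id}_X : X\to X$, so that $Y=X$ and $\phi$ is finite. Since $X$ has finitely many symplectic leaves, Theorem \ref{es3}(i) implies that $M(X) := M_{\mathrm{Id}}(X)$ is a holonomic right $\mathcal D$-module on $X$, and Theorem \ref{es3}(ii) supplies a canonical isomorphism $\pi_0(M(X)) \cong A/\lbrace A,A\rbrace = HP_0(A)$. Next, fix a closed embedding $i: X\to V$ into a smooth affine $V$. The embedded model $M(X,i)$ is the quotient of $\mathcal D_V$ by the right ideal generated by the defining ideal of $X$ together with a finite collection of vector fields on $V$ that are tangent to $X$ and restrict there to Hamiltonian vector fields of a set of generators of $A$. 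After enlarging $R$ if necessary, all of these finitely many generators lie in an $R$-form, so the quotient provides an $A_R$-lattice $M_R \subset M(X,i)$. For almost all $\mathfrak p$ the reduction $M(X)_\mathfrak p$ is then defined and its presentation reduces term-by-term, so one obtains $\pi_0(M(X)_\mathfrak p) \cong HP_0(A_\mathfrak p)$ as $A_\mathfrak p^p$-modules. The proposition now follows by applying Lemma \ref{fg}(ii) to the holonomic $\mathcal D$-module $M(X)$ and setting $c := c(M(X))$: this constant depends only on $M(X)$, and hence only on the Poisson algebra $A$, as required.

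The main technical obstacle lies in the reduction step: one has to verify that the mod-$\mathfrak p$ reduction of the presentation of $M(X,i)$ really does compute $HP_0(A_\mathfrak p)$, and not some proper quotient or extension of it. This breaks into two checks: (i) that a finite Hamiltonian generating set for the relevant right ideal in $\mathcal D_V$ is already defined over a suitable localization of $R$, so that no relations are lost and no new ones appear after reduction; and (ii) that the functors $i^!$ and $\pi_0$ both commute with reduction modulo $\mathfrak p$ for almost all primes, so that the fiber bound of Lemma \ref{fg}(ii) on $\pi_0(M(X)_\mathfrak p)$ transfers to the fibers of $HP_0(A_\mathfrak p)$ over $A_\mathfrak p^p$. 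Both points are routine once $R$ and $A_R$ are chosen appropriately, but they must be made explicit to justify that the constant $c$ is genuinely $p$-independent.
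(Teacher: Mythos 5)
Your proposal is correct and follows essentially the same route as the paper: identify $HP_0(A_{\mathfrak p})$ with $\pi_0(M(X)_{\mathfrak p})$ via Theorem \ref{es3}(ii) (and its compatibility with reduction mod $\mathfrak p$), note that $M(X)$ is holonomic by Theorem \ref{es3}(i), and invoke Lemma \ref{fg}(ii). The paper's own proof is a two-line version of exactly this argument; the extra detail you supply about choosing an $R$-form of the presentation of $M(X,i)$ and checking commutation of $i^!$, $\pi_0$ with reduction is a reasonable elaboration of what the paper leaves implicit.
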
 

\begin{proof} 
Similarly to Theorem \ref{es3}, we have $\pi_0(M(X)_{\mathfrak{p}})=
A_{\mathfrak{p}}/\lbrace{A_{\mathfrak{p}},A_{\mathfrak{p}}\rbrace}$. Thus, 
Proposition \ref{fiber} follows from Lemma \ref{fg}(ii). 
\end{proof}

\begin{example}
Let $X$ be an affine symplectic variety of dimension $2n$. 
Then it is easy to show that, similarly to characteristic zero, 
for large $p$ we have $HP_0(A_{\mathfrak{p}})=H^{2n}_{DR}(X_{\mathfrak{p}})$, 
the top De Rham cohomology of the 
the reduction $X_{\mathfrak{p}}$ of $X$. Via the Cartier operator, 
this cohomology is naturally isomorphic 
to the (infinite dimensional) space $\Omega^{2n}(X_{\mathfrak{p}}')$, 
of top differential forms on the Frobenius twist of $X_{\mathfrak{p}}$. 
\end{example} 

\subsection{Graded Poisson algebras} 

For a $\Bbb Z_+$-graded vector space $Y$ define its Hilbert series 
$$
h_Y(t)=\sum_{m=0}^\infty a_m t^m,
$$
where $a_m$ is the dimension of $Y[m]$.

Now let $A$ be a $\Bbb Z_+$-graded Poisson algebra.
This means that the multiplication has degree zero, and the Poisson bracket 
is homogeneous (of some integer degree, not necessarily zero).  
Assume that all the homogeneous components of $A$
are finite dimensional. Then for any ${\mathfrak{p}}\in {\rm Spec}(R)_p$, 
one can define the Hilbert series 
$$
H_{A,\mathfrak{p}}(t)=h_{A_{\mathfrak{p}}/\lbrace{A_{\mathfrak{p}},A_{\mathfrak{p}}\rbrace}}(t)=
\sum_{m=0}^\infty b_m t^m, 
$$
where $b_m=\dim (A_{\mathfrak{p}}/\lbrace{A_{\mathfrak{p}},A_{\mathfrak{p}}\rbrace})[m]$.  
We also define $H_{A,0}(t)=h_{A/\lbrace{A,A\rbrace}}(t)$ 
to be the same Hilbert series in characteristic zero 
(i.e., over $F$). It is clear that $H_{A,\mathfrak{p}}(t)$ 
is a power series with nonnegative coefficients, which 
by Lemma \ref{fg} (i) converges to rational function of $t$. 

\begin{proposition} Suppose that $X={\rm Spec}(A)$ has finitely many symplectic leaves. 
Then there exist integers $m_1,...,m_r$ such that for large $p$ 
$$
H_{A,\mathfrak{p}}(t)=\sum_{j=1}^r t^{m_j}f_{j,{\mathfrak{p}}}(t^p).
$$ 
\end{proposition}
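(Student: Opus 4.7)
By Lemma \ref{fg}(i), $HP_0(A_{\mathfrak{p}})$ is finitely generated as a module over $A_{\mathfrak{p}}^p$. Since the Poisson bracket is homogeneous, $\{A_{\mathfrak{p}}, A_{\mathfrak{p}}\}$ is a graded subspace, so $HP_0(A_{\mathfrak{p}}) = A_{\mathfrak{p}}/\{A_{\mathfrak{p}}, A_{\mathfrak{p}}\}$ inherits a grading compatible with the $A_{\mathfrak{p}}^p$-action. For large $p$ the subalgebra $A_{\mathfrak{p}}^p$ has Hilbert series $h_A(t^p)$, hence sits entirely in degrees divisible by $p$.

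By Proposition \ref{fiber}, the fiber of $HP_0(A_{\mathfrak{p}})$ at the graded maximal ideal of $A_{\mathfrak{p}}^p$ has dimension bounded by a constant $c$ independent of $p$. Graded Nakayama then yields a minimal homogeneous generating set $v_1, \dots, v_r$ with $r \le c$ and degrees $\mu_1, \dots, \mu_r$. Every homogeneous element of $HP_0(A_{\mathfrak{p}})$ has the form $\sum_j b_j v_j$ with $b_j \in A_{\mathfrak{p}}^p$, hence degree $\equiv \mu_j \pmod{p}$ for some $j$. Grouping the Hilbert series by residue modulo $p$ yields
\[
H_{A,\mathfrak{p}}(t) = \sum_{j=1}^{r} t^{\mu_j}\, g_{j,\mathfrak{p}}(t^p),
\]
where each $g_{j,\mathfrak{p}}(u)$ is a non-negative formal power series in $u = t^p$, rational by finite generation, recording the graded dimensions in the residue class $\mu_j \bmod p$ (assigning each homogeneous component to a unique $j$ when several $\mu_j$'s share a residue modulo $p$).

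To pass from the $p$-dependent $\mu_j$ to $p$-independent integers $m_j$, the plan is to write $\mu_j = m_j + p k_j$ with $m_j \in \mathbb{Z}$ fixed and $k_j \in \mathbb{Z}_{\ge 0}$ (allowing $m_j < 0$), and then absorb $(t^p)^{k_j}$ into $g_{j,\mathfrak{p}}$ by setting $f_{j,\mathfrak{p}}(u) = u^{k_j} g_{j,\mathfrak{p}}(u)$. The main obstacle is showing that such fixed lifts $m_j$ exist, i.e., that the residues $\mu_j \bmod p$ of the minimal generator degrees stabilize as $p$ grows. I would establish this by tracing each minimal generator back to a class in $HP_0(A_R)$: torsion-free classes contribute fixed non-negative $m_j$, while the extra generators activated in characteristic $p$ arise from $R$-relations $nv \in \{A_R, A_R\}$ coming from the denominators in the characteristic zero bracket computation, and their degrees have a fixed residue modulo $p$ (typically admitting a negative integer lift), giving the required $p$-independent $m_j$.
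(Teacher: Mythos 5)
Your argument is correct up to the crucial point, which is exactly where it stops being a proof. Using Lemma~\ref{fg}(i) and Proposition~\ref{fiber} to get a minimal homogeneous generating set $v_1,\dots,v_r$ over $A_{\mathfrak p}^p$ with $r\le c$ via graded Nakayama, and then splitting the Hilbert series by residue classes mod $p$, is all fine. But the entire content of the proposition is that the residues $\mu_j \bmod p$ can be realized by \emph{fixed} integers $m_j$ that do not depend on $p$; if the $m_j$ were allowed to vary with $p$, the statement would follow immediately from finite generation and is nearly vacuous. This is precisely the step you flag as ``the main obstacle'' and then only sketch. The sketch does not obviously work: in the Kleinian case (Example~\ref{kle}) the minimal generator responsible for the infinite part has degree $2p-2$, which grows with $p$, so it cannot literally ``come from'' a fixed torsion-free class in $HP_0(A_R)$, and the mechanism by which its residue class nevertheless stabilizes at $m_j=-2$ is not explained by appealing to denominators in the bracket. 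What is needed is a structural reason, uniform in $p$, forcing the residues into a fixed finite set.

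The paper supplies exactly such a reason by a genuinely different route. Since $X$ has finitely many symplectic leaves, $M(X)$ is holonomic, hence of finite length, hence $\operatorname{End}(M(X))$ is finite dimensional. The $\mathbb G_m$-action makes $i_*M(X)$ weakly equivariant, so the discrepancy $L=\rho(E)-\tau(E)$ between the equivariance operator $\rho(E)$ and the differential-operator action $\tau(E)$ of the Euler field is a $\mathcal D$-module endomorphism, and therefore acts semisimply with a \emph{finite} set of integer eigenvalues $m_1,\dots,m_r$ determined over $F$ in characteristic zero. These persist after reduction modulo a large prime. On the direct image $\pi_0$, the action of $\tau(E)$ vanishes, so the degree operator $\rho(E)$ equals $L$ there, forcing every degree occurring in $HP_0(A_{\mathfrak p})$ to lie in $\{m_1,\dots,m_r\} + p\mathbb Z$. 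This is the step your approach is missing, and it is not something that finite generation over $A_{\mathfrak p}^p$ alone can provide: finite generation bounds how many residue classes appear, but only the finite-length/holonomicity argument pins down \emph{which} classes, independently of $p$.
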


\begin{proof} By definition, the group $\Bbb G_m$ acts on $X$, 
and the Poisson bracket on $X$ transforms under this action by a character. 
Let $i: X\to V$ be a closed embedding of $X$ into a vector space,
which is equivariant with respect to $\Bbb G_m$. Then the action of  
$\Bbb G_m$ on $X$ and $V$ defines its action on the vector space (of global sections of) 
$i_*M(X)$ (i.e., the ${\mathcal D}$-module $i_*M(X)$ is weakly equivariant, see \cite{ES3}). 
Let $E$ be the Euler field on $V$, and $\rho(E)$ be the operator corresponding 
to $E$ under this action. Also, let $\tau(E)$ be the usual action of $E$ on $i_*M(X)$ (as a differential operator). 
Then $L:=\rho(E)-\tau(E)$ is an endomorphism of the ${\mathcal D}$-module $i_*M(X)$.  
Since $X$ has finitely many symplectic leaves, $M(X)$ is a holonomic ${\mathcal D}$-module, so 
it has finite length. Thus, so does $i_*M(X)$. This means that the operator $L$ acts 
on $i_*M(X)$ semisimply, with finitely many integer eigenvalues $m_1,...,m_r$
(as ${\rm End}(M(X))$ is a finite dimensional algebra). This remains true upon reduction
modulo a large prime. Thus, the eigenvalues of $L$ on $HP_0(A_{\mathfrak{p}})$, which is the underived direct image 
of $i_*M(X)_{\mathfrak{p}}$, are $m_1,...,m_r$. This implies the required statement, as $\tau(E)$ acts on the direct image 
by zero.   
\end{proof}

\begin{question}\label{ratfu}  
Is it true that the functions $f_{j,{\mathfrak{p}}}$ are in fact independent on ${\mathfrak{p}}$ (for large enough $p$)?  
\end{question} 

The answer to this question is positive for all the examples
considered below. 

\begin{remark}\label{counterex} 
We note, however, that the answer is negative for 
the underived direct image to the point of 
an arbitrary weakly $\Bbb G_m$-equivariant holonomic 
${\mathcal D}$-module. For example, let $M$ be the (left) ${\mathcal D}$-module on 
$\Bbb A_1$ defined by the differential equation 
$x\partial \psi=\sqrt{-1}\psi$
(it is defined over $F=\Bbb Q(\sqrt{-1})$).
This ${\mathcal D}$-module is monodromic, in 
particular, weakly $\Bbb G_m$-equivariant. 
Then, if $-1$ is a nonsquare modulo $p$
(i.e., $p=4k+3$), then $\pi_0(M_{\mathfrak{p}})=0$. 
On the other hand, if $-1$ is a square 
modulo $p$ (i.e., $p=4k+1$), then 
$\pi_0(M_{\mathfrak{p}})\ne 0$. 

One can modify this example to 
be defined over $\Bbb Q$ by taking 
the ${\mathcal D}$-module defined by the differential equations 
$x\partial \psi_1=\psi_2$, $x\partial \psi_2=-\psi_1$
(it is defined over $F=\Bbb Q$ and splits as $M\oplus \bar M$ 
after a field extension).

We note that a ${\mathcal D}$-module similar to $M$ can occur as a subquotient 
(and even direct summand) of $M(X)$ for a suitable $X$. 
Namely, by analogy with Example 4.11 of \cite{ES3},
let $Z$ be the cone of an elliptic curve $Q(x,y,z)=0$ 
(where $Q$ is a homogeneous cubic polynomial), equipped
with the standard Poisson structure, given generically by the symplectic form 
$\omega=\frac{{\bold d}x\wedge {\bold d}y\wedge {\bold d}z}{{\bold d}Q}$. 
This Poisson structure has
degree zero, so the Euler derivation $E: \OO_Z\to \OO_Z$ is Poisson.
Let $X=Z\times \Bbb A^1\times (\Bbb A^1\setminus 0)$, where $\Bbb A^1\times (\Bbb A^1\setminus 0)$ 
has coordinates $q,p$ with $\lbrace{p,f\rbrace}=\sqrt{-1} pEf$, $\lbrace{q,f\rbrace}=0$, 
$f\in \OO_Z$, and $\lbrace{p,q\rbrace}=p$. Then $X$ is a Poisson
variety with two symplectic leaves: the open four-dimensional leaf, and the 
two-dimensional leaf $S=0\times \Bbb A^1\times (\Bbb A^1\setminus 0)$. 

By analogy of Proposition 4.12 of \cite{ES3}, 
one can prove the following proposition. 

Let $i: S\to X$ be the closed embedding. 

\begin{proposition}
$M(X)$ maps surjectively onto $i_*(N_0\oplus 3N_1\oplus 3N_2\oplus N_3)$, where 
$N_m$, $m\ge 0$, is the quotient of the algebra of differential operators in $p,q$ 
by the right ideal generated by $q\partial_q-\sqrt{-1}m, \partial_p$. 
\end{proposition}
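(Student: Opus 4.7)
The plan is to adapt the strategy of Proposition 4.12 of \cite{ES3}. The first step is to analyze the Poisson structure of $X$ in an \'etale (or formal) neighborhood of the singular symplectic leaf $S$. Set-theoretically $X = Z \times S$, and at any point of $S$ the transverse slice to $S$ is the elliptic cone $Z$ with its standard Poisson structure. The cross term $\{p,f\} = \sqrt{-1}pEf$ does not alter the transverse slice, but it Euler-scales $\mathcal{O}_Z$ under the $p$-flow; this is the mechanism responsible for the $\sqrt{-1}m$ twist in the relation $q\partial_q - \sqrt{-1}m$ defining $N_m$. Indeed, for a degree-$m$ homogeneous $h \in \mathcal{O}_Z$ one computes directly that $\{p, h\, e^{-\sqrt{-1}mq}\} = 0$ and $\{q, h\, e^{-\sqrt{-1}mq}\} = 0$, so the formal twist $h \cdot e^{-\sqrt{-1}mq}$ absorbs the cross term and locally decomposes the Poisson structure as a product.

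The key computational input is the zeroth Poisson homology of the transverse slice $\mathcal{O}_Z$ at the vertex. By Alev--Lambre (cf.\ Example 4.11 of \cite{ES3}), this is the Jacobi ring $\mathrm{Jac}(Q) = \mathbb{C}[x,y,z]/(Q_x, Q_y, Q_z)$. Since $Q$ defines a smooth elliptic curve, $Q_x, Q_y, Q_z$ form a regular sequence and $\mathrm{Jac}(Q)$ is a graded complete intersection with Hilbert series $((1-t^2)/(1-t))^3 = (1+t)^3 = 1 + 3t + 3t^2 + t^3$. The graded dimensions $1, 3, 3, 1$ in degrees $0, 1, 2, 3$ match precisely the multiplicities of $N_m$ in the statement, so that each basis element of $\mathrm{Jac}(Q)_m$ is expected to yield one distinct summand $N_m$ in the target.

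With the multiplicities identified, I would construct the surjection by exhibiting, for each degree $m$ and each basis element $h \in \mathrm{Jac}(Q)_m$ lifted to a polynomial in $\mathbb{C}[x,y,z]$, an element $w_{m,h}$ of $i_*(N_m)$ (viewed as $N_m[\partial_x,\partial_y,\partial_z]$ under the Kashiwara equivalence) that is annihilated from the right by the full defining ideal of $M(X)$: by $Q$, by the Hamiltonian vector fields $\xi_x, \xi_y, \xi_z$ whose leading terms are governed by the quadratic partials $Q_x, Q_y, Q_z$, and by the leaf-directional Hamiltonians $\xi_p = \sqrt{-1}pE + p\partial_q$ and $\xi_q = -p\partial_p$. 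The ansatz for $w_{m,h}$ is dictated by requiring it to play the role of (a twisted version of) $h$ in the formal normal form of Step 1; in particular, the $\sqrt{-1}m$ eigenvalue of $q\partial_q$ on $N_m$ corresponds to the Euler weight of $h$ under the conjugation by $e^{-\sqrt{-1}mq}$. The family $\{w_{m,h}\}$ then assembles into the map $M(X) \to i_*(N_0 \oplus 3N_1 \oplus 3N_2 \oplus N_3)$, and surjectivity follows because each $N_m$ is cyclic as a $\mathcal{D}$-module and the $w_{m,h}$'s together exhaust a full basis of the transverse Poisson homology.

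The main obstacle I anticipate is the explicit choice of the $w_{m,h}$ and the careful Weyl-algebra verification of the $\xi_p, \xi_q$ annihilations. The relations $\partial_p$ and $q\partial_q-\sqrt{-1}m$ defining $N_m$ interact in a delicate way with $\xi_q = -p\partial_p$ and with the $p\partial_q$ piece of $\xi_p$, so $w_{m,h}$ cannot simply be the zeroth Kashiwara-layer generator of $N_m$ but must be adjusted by normal-direction derivatives encoded by $h$, possibly together with factors of $p^{-1}$ (which are available since $p$ is invertible on $S$). Tracking the $\sqrt{-1}$-twist consistently through these commutations and matching it against the defining relation $q\partial_q-\sqrt{-1}m$ of $N_m$ via the formal conjugation of Step 1 is the technical heart of the argument.
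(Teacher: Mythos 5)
The paper does not actually supply a proof of this Proposition; it only asserts that it can be proved ``by analogy of Proposition 4.12 of \cite{ES3}.'' So there is no argument in the text against which to compare yours; one can only check your plan on its own terms. Your high-level strategy (look at the transverse slice, use the Jacobi ring of $Q$ to get the multiplicities $1,3,3,1$, and then build the surjection $M(X)\to i_*(\bigoplus_m N_m^{c_m})$ by exhibiting suitable generators in the Kashiwara picture) is the reasonable one. The Hilbert-series computation $(1+t)^3$ and your observation that $\{p,h\,e^{-\sqrt{-1}mq}\}=\{q,h\,e^{-\sqrt{-1}mq}\}=0$ for $Eh=mh$ are both correct.

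However, there is a genuine gap at the point you yourself flag as the ``technical heart.'' Your Step~1 twist is \emph{exponential}: conjugating by $e^{-\sqrt{-1}mq}$ modifies the generator by a factor whose $\mathcal{D}$-module incarnation is the relation $\partial_q+\sqrt{-1}m$. The module $N_m$ in the statement is instead \emph{monodromic}, with relation $q\partial_q-\sqrt{-1}m$, i.e.\ a $q^{\sqrt{-1}m}$-type twist. These are not the same right $\mathcal{D}$-module on $\mathbb{A}^1_q$, and your sentence ``the $\sqrt{-1}m$ eigenvalue of $q\partial_q$ on $N_m$ corresponds to the Euler weight of $h$ under the conjugation by $e^{-\sqrt{-1}mq}$'' conflates them without justification. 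Indeed, if one carries out the Kashiwara computation with the product ansatz $w=n_m\,p^{-1}\otimes \delta\, h(\partial)$ (the $p^{-1}$ is forced on you by the condition $w\cdot\xi_q=0$, since $\xi_q=-p\,\partial_p$ and the naive $w=n_m\otimes\delta\,h(\partial)$ gives $w\cdot\xi_q=w$), then the condition $w\cdot\xi_p=0$ with $\xi_p=p\partial_q+\sqrt{-1}\,pE$ forces $n_m(\partial_q+\sqrt{-1}m)=0$, not $n_m(q\partial_q-\sqrt{-1}m)=0$. So either the simple product ansatz is wrong (and a more elaborate construction mixing the normal Fock directions with powers of $q$ is needed, as in the actual Proposition 4.12 of \cite{ES3}), or the discrepancy between your formal heuristic and the target $N_m$ has to be reconciled. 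As written, the proposal asserts compatibility where the obvious computation shows incompatibility, and the verifications of the annihilation by $\xi_x,\xi_y,\xi_z$, which couple the Fock-space factor to the $S$-factor through the $-\sqrt{-1}\,p\,(Ex_i)\,\partial_p$ terms, are not addressed at all.
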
 

However, $X$ is not conical, so 
this example does not allow us to answer Question \ref{ratfu}. 
\end{remark}

\section{Main results} 

\subsection{Quasihomogeneous isolated surface singularities} 

Let $F$ be a field of characteristic zero, and $Q\in F[x,y,z]$ be a quasihomogeneous polynomial
of degree $d$, where $\deg(x)=a$, $\deg(y)=b$, $\deg(z)=c$.  
Let $X\subset \Bbb A^3$ be the surface $Q=0$, 
and assume that $0$ is an isolated 
singular point of $X$ (in particular, $Q$ is irreducible). 

Let $A={\mathcal O}_X$. Then $A$ is a Poisson algebra with 
Poisson bracket defined by
$$
\lbrace{x,y\rbrace}=Q_z,\ \lbrace{y,z\rbrace}=Q_x,\ \lbrace{z,x\rbrace}=Q_y.
$$

\begin{proposition}\label{jacring}\cite{AL}
The subspace $\lbrace{A,A\rbrace}$ is the ideal in $A$ generated by 
$Q_x,Q_y,Q_z$. Thus, the space $HP_0(A)$ is naturally isomorphic to the Jacobi ring 
$J(Q):=F[x,y,z]/(Q_x,Q_y,Q_z)$, and 
$$
H_{A,0}(t)=\frac{(1-t^{d-a})(1-t^{d-b})(1-t^{d-c})}{(1-t^a)(1-t^b)(1-t^c)}
$$
(which is a polynomial in $t$). 
\end{proposition}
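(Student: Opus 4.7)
The plan is to establish the ideal equality $\{A, A\} = (Q_x, Q_y, Q_z) A$, identify $A/\{A,A\}$ with the Jacobi ring $J(Q)$, and extract the Hilbert series from a Koszul resolution. For the easy inclusion $\{A, A\} \subseteq (Q_x, Q_y, Q_z) A$, I would derive from the defining relations the Nambu-type determinant formula
$$\{f, g\} = \det \begin{pmatrix} \partial_x f & \partial_y f & \partial_z f \\ \partial_x g & \partial_y g & \partial_z g \\ Q_x & Q_y & Q_z \end{pmatrix}$$
on $F[x, y, z]$, which exhibits every bracket as an explicit $F[x,y,z]$-linear combination of $Q_x, Q_y, Q_z$. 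Plugging in $f = Q$ produces a matrix with a repeated row, so $Q$ is a Casimir, the bracket descends to $A = F[x,y,z]/(Q)$, and the containment follows.

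The reverse inclusion $(Q_x, Q_y, Q_z) A \subseteq \{A, A\}$ is the main content, and I expect it to be the principal obstacle. The generators $Q_x = \{y, z\}$, $Q_y = \{z, x\}$, $Q_z = \{x, y\}$ lie in $\{A, A\}$ directly, but extending to arbitrary multiples $f Q_i$ is nontrivial because $\{A, A\}$ is a priori only a subspace, not an ideal. The plan is to exploit the identity
$$(yf)_y \cdot Q_x = \{fy, z\} + y f_x Q_y,$$
obtained by expanding $\{fy, z\} = f Q_x + y\{f, z\}$ and substituting $\{f, z\} = f_y Q_x - f_x Q_y$. The map $f \mapsto (yf)_y$ is surjective on $F[y, z]$ in characteristic zero (invert $i+1$ in each monomial), which delivers $F[y, z] \cdot Q_x \subseteq \{A, A\}$ as a base case. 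Using the two cyclic permutations of this identity for $Q_y, Q_z$, and closing the cross-dependencies among the three partials by invoking the Euler relation $axQ_x + byQ_y + czQ_z = dQ$, which vanishes in $A$ since $Q = 0$ there, I would then induct on total degree to conclude $f Q_i \in \{A, A\}$ for every $f \in A$ and $i \in \{x, y, z\}$. Orchestrating this simultaneous induction cleanly is the main difficulty; characteristic zero enters both in inverting integer coefficients and in invoking Euler.

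Granted the ideal equality, the Poisson homology becomes $A/(Q_x, Q_y, Q_z) A = F[x, y, z]/(Q, Q_x, Q_y, Q_z)$, and Euler's formula further places $Q \in (Q_x, Q_y, Q_z)$ using $d \ne 0$, collapsing this to $J(Q) = F[x, y, z]/(Q_x, Q_y, Q_z)$. The isolated singularity hypothesis combined with Euler gives $V(Q_x, Q_y, Q_z) \subseteq V(Q, Q_x, Q_y, Q_z) = \{0\}$, so $(Q_x, Q_y, Q_z)$ has codimension three and is therefore a regular sequence in $F[x, y, z]$. The Koszul resolution of $J(Q)$ by this regular sequence, whose members have degrees $d - a$, $d - b$, $d - c$, then yields
$$h_{J(Q)}(t) = \frac{(1 - t^{d-a})(1 - t^{d-b})(1 - t^{d-c})}{(1 - t^a)(1 - t^b)(1 - t^c)},$$
which is a polynomial since the numerator vanishes to high enough order at $t = 1$ relative to the denominator.
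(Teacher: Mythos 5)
Your proposal is broadly correct, and it takes a genuinely different route from the paper in the crucial step. For the inclusion $(Q_x,Q_y,Q_z)\subseteq\{A,A\}$, the paper works entirely with differential forms: it reduces the problem to writing a $3$-form $h\wedge\mathbf{d}Q$ as $Qu+\mathbf{d}v\wedge\mathbf{d}Q$, uses the identity $Q\omega=\mathbf{d}E\wedge\mathbf{d}Q$ with $E=ax\,\mathbf{d}y\wedge\mathbf{d}z+by\,\mathbf{d}z\wedge\mathbf{d}x+cz\,\mathbf{d}x\wedge\mathbf{d}y$, and ends up solving a single scalar equation $g=d^{-1}(a+b+c+n)f$ in each degree $n$. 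Your route is more elementary: monomial-by-monomial bookkeeping using the Leibniz identity and cyclic permutations, closed by Euler. Both work, and yours has the advantage of not requiring the form-theoretic machinery; the paper's has the advantage of making transparent exactly which degrees fail in characteristic $p$ (as recorded in Remark~\ref{posch}).

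One caution, though, about how you propose to close the argument: ``induct on total degree'' is not the right frame, and a naive attempt at such an induction does not terminate. If you set $\alpha=x^iy^jz^kQ_x$, $\beta=x^{i-1}y^{j+1}z^kQ_y$, $\gamma=x^{i-1}y^jz^{k+1}Q_z$, your identity $\{fy,z\}=(yf)_yQ_x-yf_xQ_y$ and its two cyclic rotations give, modulo $\{A,A\}$,
\begin{equation*}
(j+1)\alpha=i\beta,\qquad (k+1)\beta=(j+1)\gamma,\qquad i\gamma=(k+1)\alpha,
\end{equation*}
all in the \emph{same} total degree, and these three relations alone are degenerate (the third is a consequence of the first two). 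Chasing them in a circle returns you to where you started with no gain. The relation that actually closes the system is Euler, $a\alpha+b\beta+c\gamma=0$ in $A$; adjoining it to the first two above gives a $3\times 3$ linear system whose determinant is $(j+1)\bigl(ia+(j+1)b+(k+1)c\bigr)$, which is nonzero in characteristic zero since the second factor equals $\deg(f)+b+c>0$. That is a direct linear-algebra argument, not an induction, and it is where characteristic zero genuinely enters. (The case $i=0$ needs your separate base case $F[y,z]\cdot Q_x\subseteq\{A,A\}$, since $x^{i-1}$ is not then a monomial.) With that repair your argument goes through; the rest of your proposal --- the determinant formula for the bracket, $Q$ being a Casimir, the regular-sequence and Koszul computation of the Hilbert series --- matches the paper.
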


\begin{proof} Note that $Q=\frac{1}{d}(axQ_x+byQ_y+czQ_z)\in (Q_x,Q_y,Q_z)\subset F[x,y,z]$, so 
$A/(Q_x,Q_y,Q_z)=J(Q)$. The formula for the Hilbert series of the Jacobi ring 
follows from the fact that $Q_x,Q_y,Q_z$ is a regular sequence in $F[x,y,z]$, as our singularity is isolated. 
Thus, it suffices to prove the first statement. 

It is clear that $\lbrace{A,A\rbrace}$ is contained in $(Q_x,Q_y,Q_z)$, so 
our job is to establish the opposite inclusion. This means that 
given a 2-form $h$ on ${\Bbb A}^3$, we must represent $h\wedge {\bold d}Q$ as $Qu+{\bold d}v\wedge {\bold d}Q$, where $u$ is a 3-form and $v$ a 1-form.

Let $\omega$ be the standard volume form ${\bold d}x\wedge {\bold d}y\wedge {\bold d}z$. 
Then $Q\omega={\bold d}E\wedge {\bold d}Q$, where
$$
E:=ax{\bold d}y\wedge {\bold d}z+by{\bold d}z\wedge {\bold d}x+cz{\bold d}x\wedge {\bold d}y.
$$
 So if $u=f\omega$, we get
$$
h\wedge {\bold d}Q=d^{-1}fE\wedge {\bold d}Q+{\bold d}v\wedge {\bold d}Q.
$$ 
So it suffices to solve for $f$ the equation
$$
h=d^{-1}fE+{\bold d}v,
$$ 
i.e., 
$$
{\bold d}h=d^{-1}{\bold d}(fE).
$$ 
Assume that ${\bold d}h=g\omega$, where $g$ is quasihomogeneous of degree $n$.
Then we need to solve $g=d^{-1}(a+b+c+n)f$
(since ${\bold d}E=(a+b+c)\omega$). So we can take $f=\frac{d}{a+b+c+n}g$, and we are done.  
\end{proof}

\begin{remark}\label{posch}
In fact, this proof partially works also in positive characteristic. 
Namely, it shows that over a field of characteristic $p$ relatively prime to $d$, 
$\lbrace{A,A\rbrace}[m]=(Q_x,Q_y,Q_z)[m]$ in all degrees $m$ except $m=pk+d-a-b-c$, where $k$ is a positive 
integer. 
\end{remark}

Our first main result is the following theorem. Let 
$$
g(z):=z^{a+b+c-d}\frac{1-z^d}{(1-z^a)(1-z^b)(1-z^c)}.
$$ 
Also, for any rational function $\phi(z)$, let 
$\phi_-(z),\phi_+(z)$ be the sum of all the terms 
of the Laurent expansion of $\phi(z)$ at $0$ 
of negative and positive degree, respectively, and let $\phi_0$ 
be the constant term of this expansion. 

Since we will do reduction modulo $p$, we assume that $F$ is a number field. 

\begin{theorem}\label{th1} For sufficiently large $p$,
the function $H_{A,{\mathfrak{p}}}(t)$ for ${\mathfrak{p}}$ sitting over $p$ 
is given by the formula 
\begin{equation}\label{mainform}
H_{A,{\mathfrak{p}}}(t)=\frac{(1-t^{d-a})(1-t^{d-b})(1-t^{d-c})}{(1-t^a)(1-t^b)(1-t^c)}+t^{d-a-b-c}f(t^p),
\end{equation}
where 
$$
f(z)=g(z)-g_-(z)+g_-(z^{-1})-g_0.
$$
\end{theorem}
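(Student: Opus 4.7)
The plan is to build directly on Proposition \ref{jacring} and Remark \ref{posch}. By Remark \ref{posch}, for sufficiently large $p$ one has $\{A_\mathfrak{p},A_\mathfrak{p}\}[m] = (Q_x,Q_y,Q_z)[m]$ in every degree $m$ except the exceptional ones $m = pk+d-a-b-c$ with $k\ge 1$. In all non-exceptional degrees, $HP_0(A_\mathfrak{p})[m]$ coincides with $J(Q)_\mathfrak{p}[m]$, contributing the first summand of \eqref{mainform}. What remains is to compute the excess
\[c_k := \dim\bigl((Q_x,Q_y,Q_z)[m]/\{A_\mathfrak{p},A_\mathfrak{p}\}[m]\bigr), \qquad m = pk+d-a-b-c,\ k\ge 1.\]

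To set up the computation of $c_k$, put $N := pk$. The Koszul complex for $\wedge dQ$ on $\Omega^\bullet_{\mathbb{A}^3}$ is exact (since $Q_x,Q_y,Q_z$ is a regular sequence for an isolated singularity), and together with the identity $Q\omega = D^{-1} E\wedge dQ$ (where $D$ denotes the weighted degree of $Q$), the map $h\mapsto (h\wedge dQ)/\omega \bmod Q$ yields an isomorphism
\[(Q_x,Q_y,Q_z)[m]\;\cong\;\Omega^2[N]\big/\bigl(F[x,y,z][N-a-b-c]\cdot E + dQ\wedge \Omega^1[N-d]\bigr),\]
under which $\{A_\mathfrak{p},A_\mathfrak{p}\}[m]$ corresponds to the image of $d\Omega^1[N]$. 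Hence
\[c_k = \dim \Omega^2[N]\big/\bigl(d\Omega^1[N] + F[x,y,z][N-a-b-c]\cdot E + dQ\wedge \Omega^1[N-d]\bigr).\]
Revisiting the proof of Proposition \ref{jacring}, the step that inverts $N$ (to solve $g = D^{-1} N f$ for $f$) fails precisely when $p$ divides $N$, confirming Remark \ref{posch} and pinpointing $c_k$ as the genuine positive-characteristic obstruction.

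To evaluate $c_k$, I would use the Cartier isomorphism, identifying $H^\bullet(\Omega^\bullet_{\mathbb{A}^3_\mathfrak{p}})$ with differential forms on the Frobenius twist of $\mathbb{A}^3_\mathfrak{p}$, in conjunction with the Koszul complex for $dQ$. The Cartier operator explicitly describes closed 2-forms in $\Omega^2[pk]$ modulo exact ones as Frobenius pullbacks of 2-forms on the twist. A careful accounting of how $d\Omega^1$, $F\cdot E$, and $dQ\wedge\Omega^1$ interact with this Frobenius-closed decomposition is expected to yield
\[c_k = \dim A_\mathfrak{p}[k+d-a-b-c] + \dim A_\mathfrak{p}[d-a-b-c-k],\]
where $\dim A_\mathfrak{p}[n] := 0$ for $n<0$. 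These two terms are precisely the coefficients $g_k$ and $g_{-k}$ in the Laurent expansion of $g(z) = z^{a+b+c-d} h_A(z)$ at $z=0$, so summing over $k\ge 1$ produces $t^{d-a-b-c}\bigl(g_+(t^p) + g_-(t^{-p})\bigr) = t^{d-a-b-c} f(t^p)$, matching the second summand of \eqref{mainform}. The main obstacle is this Cartier-and-Koszul dimension count: tracking how the Frobenius-closed cohomology in $\Omega^2[pk]$ passes through the quotients by $F\cdot E$ and $dQ\wedge\Omega^1$, and cleanly isolating the ``$g_k$'' and ``$g_{-k}$'' summands---the latter nonzero only when $a+b+c<d$, reflecting the pole of $g(z)$ at $z=0$.
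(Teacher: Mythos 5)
Your setup is sound and matches the paper's first steps: you use Remark~\ref{posch} to isolate the exceptional degrees $m = pk+d-a-b-c$, and your Koszul-complex identification of $(Q_x,Q_y,Q_z)[m]$ with a quotient of $\Omega^2[pk]$, under which $\{A_\mathfrak{p},A_\mathfrak{p}\}[m]$ corresponds to the image of $d\Omega^1[pk]$, is a correct reformulation. Your target formula
\[
c_k = \dim A_\mathfrak{p}[k+d-a-b-c] + \dim A_\mathfrak{p}[d-a-b-c-k]
\]
is also correct: writing $a_n := \dim A[n]$ and $g_n$ for the Laurent coefficients of $g$, one checks that this equals $g_k + g_{-k}$, which are exactly the coefficients of the second summand in \eqref{mainform}.

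The genuine gap is that the key dimension count is never carried out; you say explicitly that it ``is expected to yield'' the formula and call it ``the main obstacle.'' That obstacle is the entire content of the proof. Moreover, the route you sketch --- staying on $\mathbb{A}^3$ and intersecting the Cartier picture with the Koszul pieces $F\cdot E$ and $dQ\wedge\Omega^1$ --- is not a straightforward application of the Cartier isomorphism, which computes $\Omega^2_{\mathrm{closed}}/d\Omega^1$ rather than the full quotient $\Omega^2[pk]/(d\Omega^1 + F\cdot E + dQ\wedge\Omega^1)$; you would still need to argue that the image of $d\Omega^1[pk]$ in the Koszul quotient accounts for all exact $2$-forms on $X^\circ$ in that degree (the paper sidesteps this by a $\mathcal{D}$-module argument: the discrepancy between $M(X)$ and $j_*\Omega$ consists of finitely many $\delta_0$'s, which cannot affect high degrees).

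The paper's proof is genuinely different in execution: it descends to the weighted projective curve $C_X = X^\circ/\mathbb{G}_m$, identifies the degree-$(pk+d-a-b-c)$ part of the pushforward with the cokernel of the Frobenius-descent de Rham differential $d_k\colon H^0(C_X,\mathcal{L}^{\otimes pk})\to H^0(C_X,\mathcal{L}^{\otimes pk}\otimes\Omega_{C_X})$, computes $\dim\ker d_k = a_k$ via flat sections, and thus obtains the intermediate form $b_k = a_k - a_{pk} + a_{pk+d-a-b-c}$; it then needs the combinatorial Lemma~\ref{indepp}, whose proof uses a Galois automorphism $\zeta\mapsto\zeta^p$ of the cyclotomic field $\mathbb{Q}(\zeta_{\mathrm{LCM}(a,b,c)})$, to simplify this to the stated answer. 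Interestingly, if you do pass to $C_X$ as the paper does and instead compute $\dim\operatorname{coker}d_k$ via the sheaf-level Cartier exact sequence and the hypercohomology spectral sequence (noting $H^1(C_X,\mathcal{L}^{\otimes pk}) = 0$ for $p$ large by Serre vanishing, and $H^1$ of the kernel sheaf equals $H^1(\mathcal{L}^{\otimes k}) \cong H^0(\mathcal{L}^{\otimes(d-a-b-c-k)})^\vee$ by Serre duality on the Frobenius twist), you obtain $\dim\operatorname{coker}d_k = a_{d-a-b-c-k} + a_{k+d-a-b-c}$ directly, which matches your target and bypasses Lemma~\ref{indepp} entirely. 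So your target is not only correct but points to a cleaner argument than the paper's; nevertheless, as written, your proposal leaves that crucial computation unproved and does not make it clear that it can be done without the Galois step.
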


Theorem \ref{th1} was discovered by analyzing results of MAGMA 
computations, \cite{M}. It is proved in the next section. 

\begin{example}\label{kle} Suppose $Q=0$ is a Kleinian singularity. 
In this case, $d=2h$, where $h$ is the Coxeter number of the corresponding 
root system under McKay's correspondence, 
and $c=h=a+b-2$. So $a+b+c-d=2$, and thus $g_-=g_0=0$. 
Also, it is known that $H_{A,0}(t)=\sum_{i=1}^r t^{2(m_i-1)}$, 
where $m_i$, $i=1,...,r$ are the corresponding exponents. 
So we have 
$$
H_{A,{\mathfrak{p}}}(t)=\sum_{i=1}^r t^{2(m_i-1)}+t^{2p-2}\frac{1+t^{ph}}{(1-t^{pa})(1-t^{pb})}.
$$
\end{example}

\begin{example} Suppose $a=b=c=1$, i.e. $Q$ defines a smooth 
plane projective curve of degree $d$. Let $\chi=(3-d)d$ 
be the Euler characteristic of this curve. Then it follows from Theorem \ref{th1} and a 
direct computation that 
$$
H_{A,{\mathfrak{p}}}(t)=\frac{(1-t^{d-1})^3}{(1-t)^3}+t^{d-3}f(t^p),
$$
where 
$$
f(z)=\frac{1-z^d}{(1-z)^3}-\frac{\chi z}{1-z}-1.
$$
\end{example}

\subsection{Quotient singularities}

Let $V$ be a symplectic vector space over a number field $F$, 
and $B=F[V]$. Let $G$ 
be a finite subgroup of $Sp(V)$. 
Let $X=V/G$, and $A={\mathcal O}_X=F[V]^G$.
Let $S$ be the set of parabolic subgroups of $G$, i.e., 
stabilizers 
of some points of $V$. For every subgroup $K\in S$, let $V^K$ 
be the fixed subspace of $K$; its generic point has stabilizer $K$. 
Let $\overline{V^K}$ be the unique $K$-invariant complement of $V^K$ in $V$. 
This is a representation of $K$ which does not have nonzero invariants. 
Let $B_K=F[\overline{V^K}]$, and $A_K=F[\overline{V^K}]^K$.
Also, let $N(K)$ be the normalizer of $K$ in $G$, and $N_0(K):=N(K)/K$. 

Our second main result is a characterization of the space 
$B_{\mathfrak{p}}/\lbrace{A_{\mathfrak{p}},B_{\mathfrak{p}}\rbrace}$
for large $p$. 

\begin{theorem}\label{th2}
For large enough $p$, the space 
$B_{\mathfrak{p}}/\lbrace{A_{\mathfrak{p}},B_{\mathfrak{p}}\rbrace}$
is isomorphic, as a $\Bbb Z_+$-graded $G$-module, to 
the space 
$$
\oplus_{K\in S}(B_K/\lbrace{A_K,B_K\rbrace})_{\mathfrak{p}}\otimes 
H^{\dim V^K}(V^K_{\mathfrak{p}}),
$$
where the last factor is the top De Rham cohomology of $V^K$ in 
characteristic $p$. 
\end{theorem}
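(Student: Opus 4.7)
The plan is to identify, via the positive-characteristic version of Theorem \ref{es3}(ii) applied to the quotient map $\phi\colon V \to V/G = X$, the coinvariants $B_{\mathfrak{p}}/\{A_{\mathfrak{p}},B_{\mathfrak{p}}\}$ with $\pi_0(M_\phi(V)_{\mathfrak{p}})$, and then to analyze the $\mathcal{D}$-module $M_\phi(V)$ itself along the stratification of $V$ by $G$-stabilizer type.

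The heart of the argument is to exhibit, in characteristic zero, a canonical $G$-equivariant direct sum decomposition
\[
M_\phi(V) \;\cong\; \bigoplus_{K\in S}\, i_{K,*}\omega_{V^K}\otimes_F \bigl(B_K/\{A_K,B_K\}\bigr),
\]
where $i_K\colon V^K\hookrightarrow V$ is the closed embedding and $G$ permutes summands via conjugation on parabolics (so that the sum over a single conjugacy class $[K]$ realizes $\mathrm{Ind}_{N(K)}^G$ of the corresponding $N(K)$-representation). The key local input is a symplectic slice observation: since $K\subset Sp(V)$ is finite and $\overline{V^K}$ is the unique $K$-invariant complement of $V^K$, the restriction of $\omega$ to each of $V^K$ and $\overline{V^K}$ is nondegenerate (as $\omega(V^K,\overline{V^K})=0$ follows from $K$-invariance of $\omega$ together with the absence of $K$-invariants in $\overline{V^K}$). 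Consequently, étale-locally along the open stratum $V^K_{\mathrm{reg}}$ (where the stabilizer is exactly $K$), the Poisson variety $V$ factors as the product of $V^K$ with trivial bracket and $\overline{V^K}$ with its symplectic bracket, compatibly with $\phi$. Correspondingly $M_\phi(V)$, restricted to a $G$-invariant étale neighborhood of $V^K_{\mathrm{reg}}$, is isomorphic to $\omega_{V^K}\boxtimes M_{\phi_K}(\overline{V^K})$, where $\phi_K\colon \overline{V^K}\to \overline{V^K}/K$; its transverse contribution along the generic point of $V^K$ is exactly $B_K/\{A_K,B_K\}$. Globalizing this local picture, using holonomicity of $M_\phi(V)$ from Theorem \ref{es3}(i) and $G$-equivariance, first yields a filtration of $M_\phi(V)$ by supports on the closures $\overline{V^K}$; to upgrade to a direct sum one uses that distinct conjugacy classes $[K]$ produce disjoint irreducible components of the singular support in $T^*V$, and that $G$-equivariance supplies Reynolds-type projectors onto the various pieces.

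Given the characteristic-zero decomposition, one spreads it over a finitely generated $R\subset F$ and reduces modulo $\mathfrak{p}$ for $p$ sufficiently large; the isomorphism persists for almost all $\mathfrak{p}$. The underived direct image is then computed summand-by-summand by Künneth:
\[
\pi_0\bigl(i_{K,*}\omega_{V^K}\otimes_F W\bigr)_{\mathfrak{p}} \;=\; H^{\dim V^K}_{DR}(V^K_{\mathfrak{p}})\otimes W_{\mathfrak{p}},
\]
since $\pi_0$ of the dualizing right $\mathcal{D}$-module $\omega_{V^K}$ computes top De Rham cohomology. Substituting $W = B_K/\{A_K,B_K\}$ and summing over $K\in S$ yields the stated isomorphism. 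The $\mathbb{Z}_+$-grading is preserved throughout because the entire construction is $\mathbb{G}_m$-equivariant: the scaling action on $V$ commutes with $G$, and all the $\mathcal{D}$-modules and local product decompositions inherit weak $\mathbb{G}_m$-equivariance (as in the proof of the preceding proposition).

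The main obstacle I anticipate is rigorously establishing the direct sum (rather than merely filtered) structure of the characteristic-zero decomposition. The étale-local slice picture delivers the associated graded pieces, but upgrading to a genuine splitting demands either the vanishing of $\mathrm{Ext}^1$'s between the various $i_{K,*}\omega_{V^K}\otimes (B_K/\{A_K,B_K\})$, or the explicit construction of canonical $G$-equivariant splittings using the stabilizer and normalizer structure. This difficulty is also why the statement is asserted only for sufficiently large $p$: the decomposition must first be built over $F$ and then spread out over $R$ to almost all $\mathfrak{p}$.
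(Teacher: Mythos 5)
Your approach is essentially the paper's: identify $B_{\mathfrak{p}}/\{A_{\mathfrak{p}},B_{\mathfrak{p}}\}$ with $\pi_0(M_\phi(V)_{\mathfrak{p}})$, decompose $M_\phi(V)$ in characteristic zero as a direct sum over parabolic subgroups, reduce modulo $\mathfrak{p}$, and compute the pushforward term by term. The characteristic-zero decomposition $M_\phi(V)\cong\oplus_{K\in S}(B_K/\{A_K,B_K\})\otimes\delta_{V^K}$ that you try to re-derive from the symplectic-slice picture is exactly Theorem 4.13 of \cite{ES3}, which the paper cites directly; invoking it closes the gap you flag at the end about upgrading the support filtration to a genuine direct sum. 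The rest (spreading out over $R$, $\pi_0((\delta_{V^K})_{\mathfrak{p}})=H^{\dim V^K}(V^K_{\mathfrak{p}})$, summing over $K$) matches the paper's proof.
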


\begin{proof}
Let $\phi: V\to V/G$ be the projection mapping, and 
let $M_\phi(V)$ be the ${\mathcal D}$-module attached to 
$V$ and $\phi$ defined in \cite{ES3}. 
By Theorem 4.13 in \cite{ES3}, we have 
$$
M_\phi(V)=\oplus_{K\in S}(B_K/\lbrace{A_K,B_K\rbrace})\otimes 
\delta_{V^K},
$$
where $\delta_{V^K}$ is the direct image of $\Omega$ from $V^K$ to 
$V$. Now, if $\pi: V\to {\rm pt}$ is the projection, then 
$$
\pi_0((\delta_{V^K})_{\mathfrak{p}})=H^{\dim V^K}(V^K_{\mathfrak{p}}).
$$ 
This implies the required statement. 
\end{proof}

Taking $G$-invariants in Theorem \ref{th2}, we obtain

\begin{corollary}\label{cor1}
For large enough $p$, the space 
$HP_0(A_{\mathfrak{p}})$
is isomorphic, as a $\Bbb Z_+$-graded space, to 
the space 
$$
\oplus_{K\in S/G}\left(HP_0(A_K)_{\mathfrak{p}}\otimes 
H^{\dim V^K}(V^K_{\mathfrak{p}})\right)^{N_0(K)}.
$$ 
\end{corollary}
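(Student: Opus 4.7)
The plan is to deduce Corollary \ref{cor1} from Theorem \ref{th2} by taking $G$-invariants on both sides, so the main work is bookkeeping of the group actions and justifying that taking invariants commutes with the quotient by the Poisson bracket ideal. Throughout I assume $p$ is large enough that $p\nmid|G|$ and that Theorem \ref{th2} applies, so Maschke's theorem and the averaging projector $\frac{1}{|G|}\sum_{g\in G}g$ are available on every $\mathbb{Z}_+$-graded piece.

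First I would identify $HP_0(A_{\mathfrak{p}})$ with $(B_{\mathfrak{p}}/\{A_{\mathfrak{p}},B_{\mathfrak{p}}\})^G$. Since $|G|$ is invertible mod $p$, one has $A_{\mathfrak{p}}=B_{\mathfrak{p}}^G$, and the averaging argument shows $\{A_{\mathfrak{p}},B_{\mathfrak{p}}\}^G=\{A_{\mathfrak{p}},B_{\mathfrak{p}}^G\}=\{A_{\mathfrak{p}},A_{\mathfrak{p}}\}$: indeed, for $f\in A_{\mathfrak{p}}$ and $h\in B_{\mathfrak{p}}$, averaging $\{f,h\}$ over $G$ yields $\{f,\mathrm{avg}(h)\}\in\{A_{\mathfrak{p}},A_{\mathfrak{p}}\}$, while conversely $\{A_{\mathfrak{p}},A_{\mathfrak{p}}\}\subset\{A_{\mathfrak{p}},B_{\mathfrak{p}}\}^G$ is immediate. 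Exactness of the $G$-invariants functor then gives the identification.

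Next I would apply Theorem \ref{th2} to write
\[
B_{\mathfrak{p}}/\{A_{\mathfrak{p}},B_{\mathfrak{p}}\}\;\cong\;\bigoplus_{K\in S}(B_K/\{A_K,B_K\})_{\mathfrak{p}}\otimes H^{\dim V^K}(V^K_{\mathfrak{p}})
\]
as $\mathbb{Z}_+$-graded $G$-modules, and regroup the direct sum according to $G$-orbits. For a fixed orbit representative $K$, the stabilizer of $K$ under the conjugation action of $G$ on $S$ is precisely the normalizer $N(K)$, so averaging over $G$-cosets identifies the $G$-invariants of the sum over the orbit of $K$ with the $N(K)$-invariants of the single summand indexed by $K$. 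This reduces the proof to showing
\[
\bigl((B_K/\{A_K,B_K\})_{\mathfrak{p}}\otimes H^{\dim V^K}(V^K_{\mathfrak{p}})\bigr)^{N(K)}\;\cong\;\bigl(HP_0(A_K)_{\mathfrak{p}}\otimes H^{\dim V^K}(V^K_{\mathfrak{p}})\bigr)^{N_0(K)}.
\]

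To finish, I would take invariants in two stages, first under $K$ and then under $N_0(K)=N(K)/K$. Since $K$ fixes $V^K$ pointwise, it acts trivially on $H^{\dim V^K}(V^K_{\mathfrak{p}})$, so only the first tensor factor matters. Repeating the averaging argument of the first paragraph for the pair $(A_K,B_K)$, which again uses $p\nmid|K|$, gives $(B_K/\{A_K,B_K\})_{\mathfrak{p}}^{K}=HP_0(A_K)_{\mathfrak{p}}$, and combining these two steps yields the formula. The only genuine subtlety is making sure invariants really commute with the Poisson bracket quotient in positive characteristic; the potential obstacle is that reduction modulo $\mathfrak{p}$ could in principle fail to commute with taking $G$-invariants of $B$ or with the formation of $\{A,B\}$, but for $p$ large (in particular $p\nmid|G|$) both operations are preserved because the averaging projector lifts integrally after inverting $|G|$, and Theorem \ref{th2} already guarantees the required decomposition at the level of reductions.
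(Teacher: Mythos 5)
Your proposal is correct and takes exactly the route the paper intends: the paper's entire proof of this corollary is the single sentence ``Taking $G$-invariants in Theorem~\ref{th2}, we obtain,'' and you have simply filled in the bookkeeping that this phrase delegates to the reader --- the identification $(B_{\mathfrak{p}}/\{A_{\mathfrak{p}},B_{\mathfrak{p}}\})^G = HP_0(A_{\mathfrak{p}})$ via averaging with $p\nmid|G|$, the regrouping of the sum over $S$ into $G$-orbits with stabilizer $N(K)$, and the two-stage passage through $K$-invariants (using that $K$ acts trivially on $V^K$) to replace $B_K/\{A_K,B_K\}$ by $HP_0(A_K)$ with $N_0(K)$ acting. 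All of these steps are sound, and they match what the paper's terse statement requires.
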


Let us derive an explicit formula for the Hilbert series of this space. 
To this end, for every irreducible representation 
$\pi$ of $N_0(K)$, let $\psi_\pi(t)$ be the Hilbert series 
of the graded space $\Hom_{N_0(K)}(\pi,HP_0(A_K))$.
Also, let $\eta_\pi(z)$ be the Hilbert series 
of $(\pi\otimes F[V^K])^{N(K)}$. 
This is data from characteristic zero which we will assume known. 
Then, using the identification of $H^{\dim V^K}(V_{\mathfrak{p}}^K)$ 
with top differential forms on the Frobenius twist of $V_{\mathfrak{p}}^K$ 
using the Cartier operator, we have:

\begin{corollary}\label{cor2}
For large enough $p$ we have 
\begin{equation}\label{mainform1}
H_{A,\mathfrak{p}}(t)=\sum_{K\in S/G}\sum_{\pi\in {\rm Irrep}N_0(K)}
t^{(p-1)\dim V^K}\psi_\pi(t)\eta_\pi(t^p).
\end{equation}
\end{corollary}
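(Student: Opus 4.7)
The plan is to unfold each summand in Corollary \ref{cor1} into its $N_0(K)$-isotypic pieces and compute their Hilbert series. Writing $d_K := \dim V^K$, for sufficiently large $p$ the order $|N_0(K)|$ (which divides $|G|$) is invertible in $\mathbb{F}_{\mathfrak{p}}$, so Maschke's theorem applies and gives the standard identification
$$(M \otimes N)^{N_0(K)} = \bigoplus_{\pi\in {\rm Irrep}\,N_0(K)} \Hom_{N_0(K)}(\pi, M) \otimes \Hom_{N_0(K)}(\pi^*, N).$$
Applied to $M = HP_0(A_K)_{\mathfrak{p}}$ and $N = H^{d_K}(V^K_{\mathfrak{p}})$, the statement reduces to showing that these factors have Hilbert series $\psi_\pi(t)$ and $t^{(p-1)d_K}\eta_\pi(t^p)$, respectively.

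For the first factor, Theorem \ref{es3}(i) ensures that the characteristic-zero space $HP_0(A_K) = B_K/\{A_K,B_K\}$ is finite dimensional, since the associated ${\mathcal D}$-module on $\overline{V^K}$ is holonomic. Base change then shows that for large $p$ the reduction $HP_0(A_K)_{\mathfrak{p}}$ carries the same graded $N_0(K)$-character as in characteristic zero, so its $\pi$-isotypic Hilbert series equals $\psi_\pi(t)$ by definition. For the second factor, the Cartier isomorphism provides an $N_0(K)$-equivariant graded identification $H^{d_K}(V^K_{\mathfrak{p}}) \cong \Omega^{d_K}((V^K_{\mathfrak{p}})')$ with top differential forms on the Frobenius twist. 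Since $N_0(K) \subset Sp(V^K)$, the top form $dy_1'\wedge\cdots\wedge dy_{d_K}'$ is $N_0(K)$-invariant, so $\Omega^{d_K}((V^K_{\mathfrak{p}})') \cong {\mathcal O}((V^K_{\mathfrak{p}})')$ as $N_0(K)$-modules, with coordinate functions on the Frobenius twist carrying degree $p$. This produces the substitution $t\mapsto t^p$ inside $\eta_\pi$.

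The chief subtlety is pinning down the overall degree shift. A naive count based on the standard grading of differential forms would predict a shift of $p\,d_K$, but Theorem \ref{th2} is an isomorphism with respect to the ``functions'' grading in which $HP_0(A_{\mathfrak{p}})$ appears as a ${\mathbb Z}_+$-graded quotient of $B_{\mathfrak{p}}$. Under Cartier, the chosen generator of $\Omega^{d_K}((V^K_{\mathfrak{p}})')$ corresponds to the class of $y_1^{p-1}\cdots y_{d_K}^{p-1}\, dy_1\wedge\cdots\wedge dy_{d_K}$, which in this grading sits in degree $(p-1)d_K$ rather than $p\,d_K$. This gives the shift $t^{(p-1)d_K}$ and hence the Hilbert series $t^{(p-1)d_K}\,\eta_\pi(t^p)$ for the second factor. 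Multiplying the two factor series and summing over $\pi$ and $[K] \in S/G$ yields the desired identity.
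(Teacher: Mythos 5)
Your argument matches the paper's (very terse) derivation of Corollary \ref{cor2}: the paper likewise defines $\psi_\pi$ and $\eta_\pi$, applies the isotypic decomposition to Corollary \ref{cor1}, and invokes the Cartier identification $H^{\dim V^K}(V^K_{\mathfrak{p}})\cong\Omega^{\dim V^K}((V^K_{\mathfrak{p}})')$. Your verification of the degree shift $t^{(p-1)\dim V^K}$ via the Cartier formula $C^{-1}(dy_1'\wedge\cdots\wedge dy_{d_K}')=[y_1^{p-1}\cdots y_{d_K}^{p-1}\,dy_1\wedge\cdots\wedge dy_{d_K}]$, and the observation that $N_0(K)\subset Sp(V^K)$ makes the top form invariant so $\Omega^{d_K}((V^K_\mathfrak{p})')\cong \mathcal{O}((V^K_\mathfrak{p})')$ equivariantly, are details the paper leaves implicit but are exactly the computation it has in mind; the result is consistent with the independent check in Example \ref{kle}/\ref{klei}.

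One point worth flagging (though the paper glosses over it as well): the Frobenius twist $(V^K_\mathfrak{p})'$ is $N_0(K)$-equivariantly the Frobenius conjugate of $V^K_\mathfrak{p}$, so strictly speaking one should check that $\Hom_{N_0(K)}(\pi^*, \mathcal{O}((V^K_\mathfrak{p})'))$ has the same graded dimension as $\Hom_{N_0(K)}(\pi^*, \mathcal{O}(V^K_\mathfrak{p}))$ rather than that of a Galois-conjugate $\pi$. This is harmless in all the worked examples and is the same sensitivity to $\mathfrak{p}$ raised in Question \ref{ratfu}; since the paper's own proof does not address it either, it does not count against your argument.
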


\begin{example} \label{klei} Consider the case of a Kleinian singularity.
This means that $V=L$ is a 2-dimensional space, 
and $G=\Gamma$ is a finite subgroup of $SL(L)$. 
In this case, the only possibility for $K$ 
besides $K=\Gamma$ is $K=1$, in which case 
$\overline{L^K}=0$, and $N(K)=N_0(K)=\Gamma$. 
Thus, from Corollary \ref{cor2} we get 
$$
H_{A,{\mathfrak{p}}}(t)=\sum_{i=1}^r t^{2(m_i-1)}+
t^{2p-2}\eta_{\rm triv}(t^p),
$$
where $\eta_{\rm triv}(z)$ is the Hilbert series of $F[L]^\Gamma$. 
It is known that 
$$
\eta_{\rm triv}(z)=\frac{1+z^h}{(1-z^a)(1-z^b)},
$$
where $a$ and $b$ are the degrees of the primary invariants 
(the third generating invariant which is the Poisson bracket of 
the first two, 
has degree $h=a+b-2$). Thus, we get 
$$
H_{A,{\mathfrak{p}}}(t)=\sum_{i=1}^r t^{2(m_i-1)}+
t^{2p-2}\frac{1+t^{ph}}{(1-t^{pa})(1-t^{pb})},
$$
the same answer as in Example \ref{kle}. 
\end{example}

\begin{example}\label{sympow}
Let $L$ be a symplectic vector space of dimension $2d$ over $F$, 
and $V=L^n$. Take $G=S_n$ permuting the copies of $L$. 
Let $A=F[V]^G$. 
In this case, the possible groups $K$ up to conjugation are 
$K=P_\lambda=S_{\lambda_1}\times S_{\lambda_2}\times...\times S_{\lambda_k}$, 
with $\lambda_1\ge...\ge \lambda_k$ (i.e., they are parametrized 
by partitions $\lambda$ of $n$). So Corollary \ref{cor1} implies the 
following result. 

Let $m_j(\lambda)$ is the number of times $j$ occurs among $\lambda_i$. 

\begin{proposition} 
For large $p$ the space $HP_0(A_{\mathfrak{p}})$ is given by the formula 
\begin{equation}\label{hp0l}
HP_0(A_{\mathfrak{p}})=\oplus_\lambda \otimes_j {\rm Sym}^{m_j(\lambda)}H^{2d}(L_{\mathfrak{p}}).
\end{equation}
In particular, $H_{A,{\mathfrak{p}}}(t)=a_{n,p}(t)$, where 
$$
\sum_{n\ge 0}a_{n,p}(t)s^n=\prod_{j\ge 0}\prod_{k\ge 0}(1-s^{j+1}
t^{2d(p-1)+kp})^{-\binom{2d+k-1}{k}}.
$$ 
\end{proposition}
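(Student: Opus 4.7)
The strategy is to apply Corollary \ref{cor1} to $V = L^n$ and $G = S_n$, identify the summands, and sum the resulting Hilbert series. The point stabilizers in $S_n$ are, up to conjugation, the Young subgroups $K = P_\lambda = S_{\lambda_1} \times \cdots \times S_{\lambda_{\ell(\lambda)}}$, indexed by partitions $\lambda \vdash n$. For such $K$, the fixed subspace is $V^K \cong L^{\ell(\lambda)}$ (the diagonal copy of $L$ in each block), its complement decomposes as $\overline{V^K} = \oplus_i \overline{L^{\lambda_i}}_0$ where $\overline{L^m}_0 \subset L^m$ denotes the sum-zero subspace, and $N_0(K) = \prod_j S_{m_j(\lambda)}$ acts on $V^K$ by permuting the $m_j(\lambda)$ copies of $L$ coming from blocks of equal size $j$.

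The Poisson bracket vanishes between different summands of $\overline{V^K}$, so Kunneth gives $A_K = \otimes_i A_{(\lambda_i)}$ and $HP_0(A_K) = \otimes_i HP_0(A_{(\lambda_i)})$ as $N_0(K)$-modules (with $N_0(K)$ permuting factors of equal index), where $A_{(m)} := F[\overline{L^m}_0]^{S_m}$. The key claim, valid in characteristic zero, is that $HP_0(A_{(m)}) = F$ concentrated in degree zero, for every $m \ge 1$. To establish it, combine (a) the Etingof--Schedler result \cite{ES2} that $\dim HP_0(F[L^n]^{S_n}) = p(n)$ in characteristic zero, with (b) the characteristic-zero specialization of Corollary \ref{cor1} (where the De Rham factors collapse to $F$), yielding
\[
p(n) = \sum_{\lambda \vdash n} \dim \bigl( \otimes_i HP_0(A_{(\lambda_i)}) \bigr)^{\prod_j S_{m_j(\lambda)}}.
\]
Induction on $n$, with base case $HP_0(A_{(1)}) = F$, forces $\dim HP_0(A_{(m)}) = 1$ for every $m$; the class $[1] \in HP_0(A_{(m)})$ gives a nonzero element in degree zero, so the space equals $F$ in degree zero with trivial action, and hence $HP_0(A_K) = F$ trivially as an $N_0(K)$-module.

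Feeding this into Corollary \ref{cor1}, together with the Kunneth isomorphism $H^{2d\ell(\lambda)}(V^K_{\mathfrak{p}}) \cong H^{2d}(L_{\mathfrak{p}})^{\otimes \ell(\lambda)}$, gives
\[
HP_0(A_{\mathfrak{p}}) = \oplus_\lambda \bigl( H^{2d}(L_{\mathfrak{p}})^{\otimes \ell(\lambda)} \bigr)^{\prod_j S_{m_j(\lambda)}} = \oplus_\lambda \otimes_j {\rm Sym}^{m_j(\lambda)} H^{2d}(L_{\mathfrak{p}}),
\]
which is the first assertion. For the generating function, the Cartier isomorphism represents $H^{2d}(L_{\mathfrak{p}})$ by the classes of the forms $x_1^{pa_1+(p-1)} \cdots x_{2d}^{pa_{2d}+(p-1)} dx_1 \wedge \cdots \wedge dx_{2d}$, giving $h_{H^{2d}(L_{\mathfrak{p}})}(t) = \sum_{k \ge 0} \binom{2d+k-1}{k} t^{2d(p-1)+kp}$. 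The standard identity $\sum_m h_{{\rm Sym}^m V}(t) z^m = \prod_v (1 - z t^{\deg v})^{-1}$ (product over a graded basis of $V$), applied with $z = s^j$ and then multiplied over all $j \ge 1$, yields the claimed product formula after reindexing $j \mapsto j+1$. The main obstacle is establishing the key claim $HP_0(A_{(m)}) = F$ in characteristic zero; the bootstrap argument crucially depends on the nontrivial dimension count $\dim HP_0(F[L^n]^{S_n}) = p(n)$ of \cite{ES2}.
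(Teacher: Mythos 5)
Your proof is correct and follows the same route the paper intends: identify the parabolic subgroups of $S_n$ as the Young subgroups $P_\lambda$, feed them into Corollary \ref{cor1}, and take the generating function over $n$. The only detail the paper leaves tacit that you spell out is the input $HP_0(A_{(m)})=F$ in characteristic zero, which you recover by a clean (if slightly roundabout) bootstrap from the $p(n)$-count of \cite{ES2}; this fact is also available directly from \cite{ES2}, but your derivation is valid and self-contained.
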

\end{example}

\begin{remark}
In fact, it is easy to 
see that formula (\ref{hp0l}) holds for any affine symplectic 
variety $L$ of dimension $2d$, with the same proof as in the vector space case. 
This gives a generalization Theorem 1.1.1 of \cite{ES2} to the case of positive characteristic. 
\end{remark}

\begin{example} Now consider symmetric powers 
of Kleinian singularities. Let $V=L^n$ and $G=S_n\ltimes \Gamma^n$, 
where $L$ is as in Example \ref{klei}. Let $A=F[V]^G$.  
Then, generalizing Example \ref{klei} (corresponding to $n=1$), 
similarly to Example \ref{sympow} 
from Corollary \ref{cor2} we get the following proposition. 
\begin{proposition} 
We have $H_{A,{\mathfrak{p}}}(t)=a_{n,p}(t)$, where 
$$
\sum_{n\ge 0}a_{n,p}(t)s^n=\prod_{j\ge 0}\left(\prod_{i=1}^r(1-s^{j+1}t^{2(m_i-1+jh)})^{-1}
\prod_{k\ge 0}(1-s^{j+1}t^{2d(p-1)+kp})^{-d_k}\right),
$$ 
where 
$$
\sum_{k\ge 1}d_kz^k=\frac{1+z^h}{(1-z^a)(1-z^b)}.
$$
\end{proposition}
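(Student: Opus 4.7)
The plan is to apply Corollary \ref{cor1} to $V=L^n$ with $G=S_n\ltimes\Gamma^n$, and then assemble the sum over parabolic subgroups into a generating-function identity by the standard multiset/plethystic principle.

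I would first classify parabolic subgroups up to $G$-conjugacy. Since any element of $\Gamma\subset SL(L)$ fixing a nonzero vector would be a finite-order unipotent, hence trivial, the $\Gamma$-stabilizer of every nonzero vector in $L$ is trivial. So the $G$-stabilizer of a point $(v_1,\dots,v_n)$ decomposes over the $S_n$-orbits of indices: orbits with all coordinates zero contribute factors $S_m\ltimes\Gamma^m$ (``zero blocks'') and orbits of $\Gamma$-conjugate nonzero coordinates contribute factors $S_m$ (``nonzero blocks''). Conjugacy classes of parabolics are thus indexed by ordered pairs of partitions $(\mu,\nu)$ with $|\mu|+|\nu|=n$; a direct calculation in the semidirect product yields
\[
N_0(K_{\mu,\nu})=\prod_j S_{m_j(\mu)}\times\prod_j(\Gamma\wr S_{m_j(\nu)}),
\]
the wreath factor arising from the diagonal $\Gamma\subset\Gamma^m$ in each nonzero block.

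Next I would identify the data of Corollary \ref{cor1} for each such $K$. The fixed space is $V^K=L^{\ell(\nu)}$ (one diagonal copy of $L$ per nonzero block), and $\overline{V^K}$ is a product of $L^m$ over zero blocks and $L^m_{\mathrm{red}}$ over nonzero blocks, so $HP_0(A_K)$ factors in characteristic zero as a tensor product over blocks. For a zero block of size $m$, the block factor is $HP_0(F[L^m]^{S_m\ltimes\Gamma^m})$, whose Hilbert series I would read off by plethystic-logarithm inversion of the classical characteristic-zero formula (\cite{ES1}) for the Hilbert series of $HP_0$ of the $n$-th symmetric power of the Kleinian singularity: $P^{(0)}_m(t)=\sum_{i=1}^r t^{2(m_i-1+(m-1)h)}$. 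For a nonzero block of size $m$, the block factor is $HP_0(F[L^m_{\mathrm{red}}]^{S_m})$; by the pure-symplectic symmetric-power input from \cite{ES2} used in Example \ref{sympow}, this is $F$ concentrated in degree zero, with the diagonal $\Gamma$ acting trivially since it fixes scalars. Through the Cartier operator, $H^2(L_{\mathfrak{p}})^\Gamma$ has Hilbert series $t^{2(p-1)}\eta(t^p)$ with $\eta(z)=h_{F[L]^\Gamma}(z)=(1+z^h)/((1-z^a)(1-z^b))$, so each nonzero block contributes $t^{2(p-1)}\eta(t^p)$ independent of its size.

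Combining these, the $K_{\mu,\nu}$-summand of Corollary \ref{cor1} factorizes block by block; for each block size $j$ and multiplicity $m_j$, the invariant under $S_{m_j(\mu)}$ or $\Gamma\wr S_{m_j(\nu)}$ of an $m_j$-fold tensor power yields a symmetric power of the per-block contribution. Summing over all pairs of partitions $(\mu,\nu)$ weighted by $s^n$ thus produces a plethystic exponential
\[
\sum_{n\ge 0}a_{n,p}(t)\,s^n=\prod_{j\ge 0}\left(\prod_{i=1}^r(1-s^{j+1}t^{2(m_i-1+jh)})^{-1}\cdot\prod_{k\ge 0}(1-s^{j+1}t^{2(p-1)+kp})^{-d_k}\right),
\]
as claimed, where $\sum_k d_k z^k=\eta(z)$.

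The main obstacle is the second paragraph: one must correctly compute the normalizer quotient $N_0(K_{\mu,\nu})$ inside the semidirect product (verifying the appearance of the diagonal $\Gamma$ and the resulting wreath structure on nonzero blocks), and justify that the characteristic-zero $HP_0(F[L^m_{\mathrm{red}}]^{S_m})$ is the trivial one-dimensional $\Gamma$-module for every $m\ge 1$, which rests on the pure-symplectic input of \cite{ES2}. Once these are in hand, the rest is routine generating-function manipulation.
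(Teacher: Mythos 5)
Your overall strategy --- apply Corollary \ref{cor1}, enumerate the parabolic subgroups of $S_n\ltimes\Gamma^n$, compute $N_0(K)$ and the summands, and assemble the generating function --- is the intended one, and your treatment of the nonzero blocks (triviality of stabilizers of nonzero vectors in $L$, the diagonal $\Gamma$ producing wreath factors in the normalizer, $HP_0(F[\overline{L^m}]^{S_m})=F$ with trivial $\Gamma$-action, and the Cartier identification of $H^{2}(L_{\mathfrak{p}})^\Gamma$ with Hilbert series $t^{2(p-1)}\eta(t^p)$) is correct. However, your classification of parabolics is wrong, and this is a genuine gap. All zero coordinates of a point $(v_1,\dots,v_n)$ lie in a \emph{single} block: the stabilizer contains every transposition of two indices with $v_i=v_j=0$, so a subgroup acting on the zero coordinates as $\prod_j(S_{\mu_j}\ltimes\Gamma^{\mu_j})$ with $\ell(\mu)\ge 2$ is never the stabilizer of a point. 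Conjugacy classes of parabolics are therefore indexed by pairs $(m_0,\nu)$ with $m_0+|\nu|=n$ (one zero block of size $m_0$, plus a partition $\nu$ recording the sizes of the nonzero blocks), not by pairs of partitions $(\mu,\nu)$. Correspondingly, the zero-block factor of $HP_0(A_K)$ is $HP_0(F[L^{m_0}]^{S_{m_0}\ltimes\Gamma^{m_0}})$, the full zeroth Poisson homology of the $m_0$-th symmetric power of the Kleinian singularity; by \cite{ES1} its Hilbert series is the coefficient of $s^{m_0}$ in $\prod_{j\ge 0}\prod_{i=1}^r(1-s^{j+1}t^{2(m_i-1+jh)})^{-1}$, not the single term $P^{(0)}_{m_0}(t)=\sum_i t^{2(m_i-1+(m_0-1)h)}$ (already for $m_0=2$ the correct series also contains ${\rm Sym}^2$ of the $m_0=1$ answer).

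These two errors happen to cancel: summing $s^{|\mu|}\prod_j{\rm Sym}^{m_j(\mu)}\bigl(P^{(0)}_j\bigr)$ over all partitions $\mu$ is exactly the plethystic-exponential expansion of the ES1 product, which is what the single zero block contributes when summed over $m_0$. So your final generating function is correct, but the derivation as written misidentifies both the index set of the sum in Corollary \ref{cor1} and the individual summands, and would not survive scrutiny at the level of graded $G$-modules rather than Hilbert series. The fix is short: index by $(m_0,\nu)$, observe that the answer factors as (generating function over $m_0$ of the zero-block contributions, which is the ES1 product) times (generating function over $\nu$ of the nonzero-block contributions, which your computation correctly identifies as $\prod_{j\ge 1}\prod_{k\ge 0}(1-s^{j}t^{2(p-1)+kp})^{-d_k}$), and note that the product of these two factors is the stated formula.
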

\end{example}

\section{Proof of Theorem \ref{th1}}

By rescaling the degrees, we may assume that 
$GCD(a,b,c)=1$. Let us make this assumption 
throughout the proof. 

It follows from Remark \ref{posch} that for large $p$ 
$$
H_{A,{\mathfrak{p}}}(t)=\frac{(1-t^{d-a})(1-t^{d-b})(1-t^{d-c})}{(1-t^a)(1-t^b)(1-t^c)}+t^{d-a-b-c}f(t^p),
$$
where $f$ is a rational function such that $f(0)=0$. 
So it remains to compute $f$. We will compute $f$ using the formula $HP_0(A_{\mathfrak{p}})=\pi_0(M(X)_{\mathfrak{p}})$. 

The surface $X$ consists of two symplectic leaves -- the point $0$ and its complement $X^\circ$. 
Thus, $M(X)$ is an extension (on both sides) of the irreducible ${\mathcal D}$-module $IC_X$ by finitely many 
copies of $\delta_0$. The $\delta_0$-constituents cannot contribute to the function $f$, so 
for the purposes of computing $f$ we can replace $M(X)$ by the complex of ${\mathcal D}$-modules $j_*\Omega$, where $j: X^\circ \to X$ is the 
open embedding. Indeed, $j_*\Omega$ differs from $M(X)$ by finitely many 
copies of $\delta_0$. 

Thus, we need to compute the components of degrees $pk+d-a-b-c$ (for $k>0$) in $H^0(\pi_*(j_*\Omega)_{\mathfrak{p}})$. 
Since we are interested in large positive degrees only, we can replace $(j_*\Omega)_{\mathfrak{p}}$ by 
$j_*\Omega_{\mathfrak{p}}$. So we need to compute 
components of degrees $pk+d-a-b-c$ in $H^0(\pi_*j_*\Omega_{\mathfrak{p}})=H^0(\tilde \pi_*\Omega_{\mathfrak{p}})$,
where $\tilde\pi: X^\circ\to {\rm pt}$ is the projection. 

This cohomology can be computed by first modding out by the action of $\Bbb G_m$ and then projecting 
to the point. Namely, let $C_X=X^\circ/\Bbb G_m$ be the corresponding weighted projective curve. 
This curve is in general an orbifold, but in our computation it behaves like a smooth curve (with appropriate 
modifications to take into account orbifold points). We have the maps $\xi: X^\circ \to C_X$ and 
$\theta: C_X\to {\rm pt}$, and $\tilde \pi_*=\theta_* \circ \xi_*$. So we need to compute the components of 
degrees $pk+d-a-b-c$ in $H^0(\theta_* \xi_*\Omega_{\mathfrak{p}})$.  

Recall that if ${\mathcal L}$ is any line bundle on a smooth variety $Y$ in characteristic $p$, then 
the line bundle ${\mathcal L}^{\otimes p}$ carries a canonical flat connection. 
Namely, if $\nabla$ is any connection on ${\mathcal L}$ defined on an affine open set $U\subset Y$, then 
we have a natural connection $\nabla^{(p)}$ on ${\mathcal L}^{\otimes p}$ over $U$. 
We claim that this connection is independent of $\nabla$. Indeed, if 
$\nabla_2=\nabla_1+a$, where $a$ is a 1-form, then 
$\nabla_2^{(p)}=\nabla_1^{(p)}+pa=\nabla_1^{(p)}$. 
This implies that $\nabla^{(p)}$ is defined globally and is canonical.
Also, it is flat, since the curvature of $\nabla^{(p)}$ is $p$ times the curvature of $\nabla$. 

Since $\xi$ is a principal $\Bbb G_m$-bundle over $C_X$, 
and $H^0_{DR}(\Bbb G_m,\Bbb F_q)=\Bbb F_q[z^p,z^{-p}]$,
$H^1_{DR}(\Bbb G_m,\Bbb F_q)=\Bbb F_q[z^p,z^{-p}]dz/z$, 
we have 
$$
H^0(\xi_*\Omega_{\mathfrak{p}})=H^{-1}(\xi_*\Omega_{\mathfrak{p}})=\oplus_{k\in \Bbb Z}{\mathcal L}^{\otimes pk}\otimes \Omega_{\mathfrak{p}},
$$
where ${\mathcal L}$ is the tautological line bundle on $C_X$, and all the other cohomology groups are zero 
(here to simplify notation, we denote the reduction of 
$C_X$ to characteristic $p$ also by $C_X$). 

Note that by the above explanation, 
the right hand side has a canonical structure of a right ${\mathcal D}$-module on $C_X$. We claim that this is the 
${\mathcal D}$-module structure of the direct image; in other words, the Gauss-Manin connection on ${\mathcal L}^{\otimes pk}$ coincides with 
the canonical connection defined above. Indeed, this follows from the fact that it is so for the trivial line bundle on a formal disk. 

Our job is to compute the large positive degree 
components of the zeroth cohomology of $\theta_*$ of this complex of ${\mathcal D}$-modules. 

Since we are only interested in large positive degrees, it suffices to restrict ourselves to $k>0$. Consider 
$\theta_*({\mathcal L}^{\otimes pk}\otimes \Omega_{\mathfrak{p}})$. This is nothing but the cohomology of the 
complex 
$$
0\to H^0(C_X,{\mathcal L}^{\otimes pk})\to H^0(C_X,{\mathcal L}^{\otimes pk}\otimes \Omega_{\mathfrak{p}})\to 0,
$$
where the map is the De Rham differential ${\bold d}_k$ corresponding to the canonical connection on ${\mathcal L}^{\otimes pk}$; 
this cohomology is concentrated in degrees $0$ and $-1$. Thus, to compute the zeroth cohomology of 
$\theta_*\xi_*\Omega_{\mathfrak{p}}$ in large positive degrees, we only need to consider 
$H^0(\xi_*\Omega_{\mathfrak{p}})$ (and not $H^{-1}$). Specifically, we have   
$$
H^0(\theta_*\xi_*\Omega_{\mathfrak{p}})[pk+d-a-b-c]={\rm Coker}{\bold d}_k.
$$ 

It remains to compute $b_k:=\dim{\rm Coker}{\bold d}_k$. We have 
$$
\dim {\rm Coker}{\bold d}_k=\dim{\rm Ker}{\bold d}_k-
\dim H^0(C_X,{\mathcal L}^{\otimes pk})+\dim H^0(C_X,{\mathcal L}^{\otimes pk}\otimes \Omega_{\mathfrak{p}}).
$$
Now, the flat sections of the canonical connection on ${\mathcal L}^{\otimes pk}$ 
are $p$-th powers of sections of 
${\mathcal L}^{\otimes k}$, so 
$\dim {\rm Ker} {\bold d}_k=\dim H^0(C_X,{\mathcal L}^{\otimes k})$.
So 
$$
b_k=\dim H^0(C_X,{\mathcal L}^{\otimes k})-
\dim H^0(C_X,{\mathcal L}^{\otimes pk})+\dim H^0(C_X,{\mathcal L}^{\otimes pk}\otimes \Omega_{\mathfrak{p}}).
$$

Consider the symplectic form on $X^\circ$. It is given by the formula 
$$
\omega=\frac{{\bold d}x\wedge {\bold d}y\wedge {\bold d}z}{{\bold d}Q},
$$ 
so it has homogeneity degree $a+b+c-d$. This implies that the line bundle 
$\Omega\otimes {\mathcal L}^{a+b+c-d}$ on $C_X$ is trivial, i.e., 
$\Omega={\mathcal L}^{d-a-b-c}$. (Indeed, every local section 
$s$ of ${\mathcal L}^{d-a-b-c}$ gives rise to a dilation-invariant 2-form 
$s\omega$ on $X^\circ$, whose pushforward to $C_X$ is a 1-form on 
$C_X$). 

Thus, we get 
$$
b_k=a_k-a_{pk}+a_{pk+d-a-b-c},
$$
where $a_k=\dim H^0(C_X,{\mathcal L}^{\otimes k})$, i.e. 
$$
\sum_{k=0}^\infty a_kz^k=h_A(t)=\frac{1-z^d}{(1-z^a)(1-z^b)(1-z^c)}
$$  

The rest of the proof is combinatorial. 
We can rewrite the formula for $b_k$ as 
$$
b_k=a_k-c_{pk}, 
$$
where $c_l$ is the $l$-th coefficient of 
the Laurent expansion at $t=0$ of the function 
$$
u(z)=\frac{(1-z^d)(1-z^{a+b+c-d})}{(1-z^a)(1-z^b)(1-z^c)}.
$$

\begin{lemma}\label{indepp}
(i) The function $u(z)$ admits a representation 
$$
u(z)=v(z)+\sum_{i=1}^N \alpha_i \frac{\beta_i z}{1-\beta_i z},
$$
where $\alpha_i\in \Bbb Q$, $\beta_i$ are roots of unity of 
degree $LCM(a,b,c)$, and $v(z)=g_-(z^{-1})-g_-(z)-g_0+1$. 

(ii) If $r$ is coprime to $LCM(a,b,c)$, then for any positive $k$, 
$c_{rk}=c_k-s_k+s_{rk}$, where $\sum_{k\ge 1}s_kz^k=g_-(z^{-1})$. 
\end{lemma}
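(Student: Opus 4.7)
The plan is to show that $u(z) - v(z)$ is a rational function with only simple poles at $L$-th roots of unity (where $L = \text{LCM}(a,b,c)$), with a finite limit at $z = \infty$ and vanishing value at $z = 0$; a standard partial-fraction decomposition will then give the expansion in (i), and rationality of the $\alpha_i$ will follow from Galois invariance. Part (ii) is then a quick corollary.

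The behavior of $u - v$ at $0$ and $\infty$ rests on the functional equation $h_A(1/z) = z^{a+b+c-d} h_A(z)$, equivalently $g(1/z) = h_A(z)$, which is immediate from the definitions of $h_A$ and $g$. Writing the Taylor expansion of $g$ at zero as $g = g_- + g_0 + g_+$, this functional equation expresses $h_A(z)$ at infinity as $g_-(z^{-1}) + g_0 + g_+(z^{-1})$, so $h_A(z) - g_-(z^{-1}) - g_0 \to 0$ as $z \to \infty$. Combined with $u = h_A - g$ and the definition of $v$, this yields
\[
u(z) - v(z) = \bigl(h_A(z) - g_-(z^{-1}) - g_0\bigr) - g_+(z) - (1 - g_0),
\]
which has the finite limit $0 - (1 - g_0) - (1 - g_0) = 2g_0 - 2$ at $\infty$ (using $g_+(\infty) = 1 - g_0$, which follows from $g(\infty) = 1$). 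At $z = 0$, the negative-degree and constant parts of the Laurent expansion of $u$ are $-g_-(z)$ and $1 - g_0$ respectively, matching those of $v$; hence $u - v$ is a Taylor series vanishing at $0$.

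The pole structure is analyzed next. Any pole of $u$ away from $0$ and $\infty$ lies at an $L$-th root of unity. At a primitive $m$-th root $\zeta$ with $m > 1$, the denominator $(1-z^a)(1-z^b)(1-z^c)$ vanishes to order at most $2$, since $\gcd(a,b,c) = 1$ prohibits $m$ from dividing all three weights. When this order equals $2$ --- say $m \mid a$ and $m \mid b$ but $m \nmid c$ --- the polynomiality of the Jacobi Hilbert series established in Proposition \ref{jacring} (which reflects the isolated-singularity assumption) forces $m \mid d$; hence the numerator factor $(1 - z^d)$ of $u$ vanishes at $\zeta$ and the pole is simple. At $\zeta = 1$ both numerator factors of $u$ vanish, giving pole order $3 - 2 = 1$. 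Since $v$ has no poles outside $0$ and $\infty$ (which have been handled), $u - v$ has only simple poles, all at $L$-th roots of unity.

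A standard partial-fraction expansion then writes $u(z) - v(z) = \sum_\beta A_\beta \frac{\beta z}{1-\beta z}$, summed over those $L$-th roots of unity $\beta$ where $u$ has a pole, with $A_\beta \in \mathbb{Q}(\beta)$. Since $u$ and $v$ lie in $\mathbb{Q}(z)$, uniqueness of partial fractions forces $A_{\sigma(\beta)} = \sigma(A_\beta)$ for every automorphism $\sigma$ of $\mathbb{Q}(\zeta_L)/\mathbb{Q}$; as this Galois group acts transitively on the primitive $m$-th roots of unity for each $m \mid L$, the coefficient $A_\beta$ depends only on the order of $\beta$ and therefore lies in $\mathbb{Q}$, completing (i). For (ii), part (i) yields $c_l = s_l + \sum_i \alpha_i \beta_i^l$ for $l \geq 1$ (using that the positive-degree part of $v$ is $g_-(z^{-1}) = \sum s_k z^k$); when $\gcd(r, L) = 1$, the map $\beta \mapsto \beta^r$ permutes the primitive $m$-th roots of unity within each orbit, and together with the orbit-constancy of the $\alpha_i$ gives $\sum_i \alpha_i \beta_i^{rk} = \sum_i \alpha_i \beta_i^k$, whence $c_{rk} = s_{rk} + (c_k - s_k) = c_k - s_k + s_{rk}$. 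The subtlest step is ruling out double poles at roots of unity, which crucially uses the isolated-singularity hypothesis via Proposition \ref{jacring}.
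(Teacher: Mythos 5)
Most of your argument is sound and runs parallel to the paper's: the pole analysis (using polynomiality of the Jacobi Hilbert series from Proposition \ref{jacring} to force $m\mid d$ and hence simple poles), and the determination of $v$ via the expansions of $u$ at $0$ and $\infty$ (the paper instead uses the identity $u(z)+u(z^{-1})=0$, but your direct comparison is equally valid). However, there is a genuine gap at the one step that carries the real content of the lemma: the rationality of the $\alpha_i$. Your Galois argument only establishes the equivariance $A_{\sigma(\beta)}=\sigma(A_\beta)$, which is \emph{equivalent} to the statement $u-v\in\QQ(z)$ and carries no further information. Transitivity of $\operatorname{Gal}(\QQ(\zeta_L)/\QQ)$ on primitive $m$-th roots then tells you only that $\{A_\beta\}_{\operatorname{ord}\beta=m}$ forms a single Galois orbit; it does \emph{not} imply that $A_\beta$ depends only on $\operatorname{ord}(\beta)$, nor that $A_\beta\in\QQ$. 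A counterexample to your inference: the function $\sum_{\beta}\beta\cdot\frac{\beta z}{1-\beta z}$, summed over primitive $m$-th roots of unity, lies in $\QQ(z)$ and is fully Galois-equivariant with $A_\beta=\beta\notin\QQ$. (Equivariance only pins $A_\beta$ down to lie in $\QQ(\beta)$, being fixed by the stabilizer of $\beta$.)

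This is exactly where the paper does its hardest work, and the hypothesis enters a second time: at a pole $\zeta\ne 1$ with, say, $\zeta^a=1$ but $\zeta^b,\zeta^c,\zeta^d,\zeta^{a+b+c-d}\ne1$, the polynomiality of the Jacobi character forces $\zeta^{d-b}=1$ or $\zeta^{d-c}=1$, which (via $\zeta^{a+b+c-d}=\zeta^c$, say) produces cancellations between numerator and denominator in the residue of $u(z)\,dz/z$ and yields the explicitly rational values $-1/a$ (one weight divisible by $\operatorname{ord}\zeta$) and $-d/(ab)$ (two weights divisible). Without this computation, part (ii) also collapses: applying $\sigma:\zeta\mapsto\zeta^r$ to $c_k-s_k=\sum_i\alpha_i\beta_i^k$ gives $c_k-s_k=\sum_i\sigma(\alpha_i)\beta_i^{rk}$, and you can only replace $\sigma(\alpha_i)$ by $\alpha_i$ (to obtain $c_{rk}-s_{rk}$) once rationality, or at least orbit-constancy, of the $\alpha_i$ is actually proved. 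You need to supply the residue computation.
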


\begin{proof}
(i) First of all, the function $u(z)$ has only simple poles at $z\ne 0,\infty$.
Indeed, since $GCD(a,b,c)=1$, it suffices to consider a 
point $z=\zeta$ which is a root of unity of degree $m$ dividing 
two of the numbers $a,b,c$, e.g., $a$ and $b$.  
In this case, the denominator of $u$ has a double zero, so it suffices 
to show that the numerator has a zero. To see this, recall that $Q=0$
is an isolated singularity, so $Q$ must contain a term involving only 
the variables $x,y$ (otherwise $Q$ would be reducible). This implies that $m$ divides $d$, 
so the numerator has a zero, and the pole at $\zeta$ is simple. 

This implies that $u$ has a required representation with some $v,\alpha_i$ 
(and this representation is obviously unique). It remains to show that 
$\alpha_i\in \Bbb Q$ and prove the formula for $v(z)$. 

The rationality of $\alpha_i$ is the statement 
that the residues of $u(z)dz/z$ at $z\ne 0$ are 
rational. To see this, we should consider a pole $\zeta$, 
and we can clearly assume that $\zeta\ne 1$ 
(since the residue at $\zeta=1$ is obviously rational). 
Assume first that $\zeta^a=1$ 
but $\zeta^b,\zeta^c,\zeta^d,\zeta^{a+b+c-d}\ne 1$. 
In this case, since the character of the Jacobi ring is a polynomial, we have 
that either $\zeta^{d-b}=1$ or $\zeta^{d-c}=1$. 
Assume that $\zeta^{d-b}=1$. Then $\zeta^{a+b+c-d}=\zeta^c$. 
So in the expression of the residue, two factors in the numerator cancel with 
two factors in the denominator, and the residue equals $-1/a$. 
Similarly, if $\zeta^a=1$ and $\zeta^b=1$ then $\zeta^d=1$, so 
$\zeta^{a+b+c-d}=\zeta^c$, and after canceling a factor 
in the numerator and denominator we get that the residue is $-\frac{d}{ab}$.

Now, $u(z)+u(z^{-1})=0$, so we get that $v(z)+v(z^{-1})$ 
is a function regular at zero and infinity. Hence it is a constant.
Clearly, $v_-(z)=-g_-(z)$, so $v(z)=g_-(z^{-1})-g_-(z)+C$,
and it is easy to see that $C=-g_0+1$. So (i) is proved.  

(ii) From (i) we have 
$$
\sum_{k\ge 1}(c_k-s_k)z^k=\sum_{i=1}^N \alpha_i \frac{\beta_i z}{1-\beta_i z}.
$$
So $c_k-s_k=\sum_{i=1}^N \alpha_i\beta_i^k$. 
Now, the right hand side lies in the cyclotomic field 
$\Bbb Q(\zeta)$, where $\zeta=e^{2\pi i/LCM(a,b,c)}$. This field has an automorphism 
$\zeta\to \zeta^r$. Since $\alpha_i$ are rational, applying this automorphism, we get 
$$
c_k-s_k=\sum_{i=1}^N \alpha_i\beta_i^{rk}=c_{rk}-s_{rk},
$$
as desired. 
\end{proof}

Now we can finish the proof of the theorem. By Lemma \ref{indepp}(ii), 
for large $p$, $c_{pk}=c_k-s_k$ (as $s_{pk}=0$). 
So we find that 
$$
b_k=a_k-c_{pk}=a_k-c_k+s_k.
$$
These are nothing but the positive degree coefficients of the function 
$g(z)+g_-(z^{-1})$, which implies that 
$$
\sum_{k\ge 0}b_kz^k=g(z)+g_-(z^{-1})-g_-(z)-g_0,
$$
as claimed in the theorem. Theorem \ref{th1} is proved. 

\section{Results and conjectures for small $p$}

The results we have given so far hold for sufficiently large $p$, without any 
specification on how large. 
In fact, in the setting of the previous subsections, the question how large 
$p$ should be does not even make sense, because this depends on the choice of the
reduction procedure modulo $p$. 
Nevertheless, in specific examples there are natural ways to 
make sense of lower bounds on $p$. The goal of this section is to discuss some of these examples.
Most of our statements are conjectural, based on computer 
evidence (coming from computation in MAGMA, \cite{M} and intuition). 

\subsection{Quasihomogeneous isolated surface singularities} 

Let $A$ be the algebra of regular functions on a 
quasihomogeneous isolated surface singularity $X$ given by the equation 
$Q(x,y,z)=0$ over a field $\Bbb F$ 
of characteristic $p$. 

\begin{conjecture}\label{lowbound1}
Formula (\ref{mainform}) of Theorem \ref{th1} holds 
for $h_{A/\lbrace{A,A\rbrace}}(t)$ as long as \linebreak $p>2d-a-b-c$. In particular, for Kleinian singularities 
the formula holds for $p>h$, and for cones of smooth projective curves of degree $d$ 
it holds for $p>2d-3$.   
\end{conjecture}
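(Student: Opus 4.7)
The key observation is that the bound $p > 2d-a-b-c$ precisely separates the two summands of formula (\ref{mainform}) by degree: the Jacobi ring $J(Q) = F[x,y,z]/(Q_x,Q_y,Q_z)$, being a complete intersection, has top degree $3d - 2(a+b+c)$, while the first correction degree appearing in Remark \ref{posch} is $p + d - a - b - c$, and $p > 2d-a-b-c$ is exactly equivalent to $p + d - a - b - c > 3d - 2(a+b+c)$. Hence every correction degree $pk + d - a - b - c$ ($k \geq 1$) lies strictly above the support of $J(Q)$, and one can hope to prove the two ranges independently.

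For degrees $m \leq 3d-2(a+b+c)$, the hypothesis excludes $m$ from the set $\{pk + d-a-b-c : k \geq 1\}$, so Remark \ref{posch} gives $\{A,A\}[m] = (Q_x,Q_y,Q_z)[m]$ and hence $(A/\{A,A\})[m] = J(Q)[m]$, reproducing the first summand of (\ref{mainform}). For degrees $m > 3d-2(a+b+c)$, one has $A[m] = (Q_x,Q_y,Q_z)[m]$ automatically and the Hilbert series is supported precisely at $m = pk + d - a - b - c$. On these degrees I would run the $\mathcal{D}$-module argument of Section 4 verbatim: identify $HP_0(A_{\mathfrak{p}})$ with $\pi_0(M(X)_{\mathfrak{p}})$, replace $M(X)$ by $j_*\Omega$ on the open symplectic leaf $X^\circ$, push forward along $\xi : X^\circ \to C_X$ via the canonical flat connection on $\mathcal{L}^{\otimes pk}$, and invoke Lemma \ref{indepp}. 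The auxiliary arithmetic conditions on $p$ (coprimality to $d$ and to $LCM(a,b,c)$, and $s_{pk}=0$ for $k \geq 1$) have to be verified under our hypothesis; the last is automatic since it amounts to $pk > d-a-b-c$, while the coprimality conditions require a case-by-case check on the finitely many weight triples that could fail them.

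The main obstacle is justifying the substitution $M(X) \rightsquigarrow j_*\Omega$ under the sharp bound. In the proof of Theorem \ref{th1} this step discarded the $\delta_0$-supported composition factors of $M(X)$ under ``sufficiently large $p$'' without tracking an explicit threshold; to establish $p > 2d-a-b-c$, one must prove that their underived direct image contributes only in degrees $\leq 3d-2(a+b+c)$, so that the correction range remains unaffected. I would attempt this by lifting the characteristic-zero composition series of $M(X)$ to an $R$-lattice, bounding the degree support of each graded piece via the explicit classification of isolated quasihomogeneous surface singularities, and verifying that reduction modulo $p > 2d-a-b-c$ preserves this support. Failing that, one could bypass $\mathcal{D}$-modules entirely by a Koszul-type direct computation of $\{A,A\}[m]$ in the correction degrees, using the Hamiltonian-vector-field representation from the proof of Proposition \ref{jacring} and tracking the obstruction $a+b+c+n \equiv 0 \pmod p$ carefully through the characteristic-$p$ version of that argument. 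The sharper Kleinian bound $p > h$ would fit the same framework, with the $\delta_0$-analysis made explicit through the McKay correspondence and the fact that the entire Jacobi ring lives in degrees $\leq 2h - 4$.
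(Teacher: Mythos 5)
What you were asked to prove is labelled a \emph{conjecture} in the paper, and the paper does not prove it in the stated generality. Immediately after Conjecture~\ref{lowbound1}, the authors offer only the heuristic remark that for $p>2d-a-b-c$ the correction range $\{pk+d-a-b-c: k\ge 1\}$ sits strictly above the top degree $3d-2(a+b+c)$ of $J(Q)$, so ``the corresponding spaces don't interact'' --- exactly the degree-arithmetic observation with which your proposal opens. Beyond this motivation the paper gives only two partial results: the unconditional Proposition that formula~(\ref{mainform}) is correct up to adding a polynomial (by identifying $HP_0(A)$ with $\Omega(X')$ away from the singular point and matching Hilbert series), and a complete proof in the Kleinian case only (Theorem~\ref{kleini}). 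So there is no ``paper's own proof'' of the general conjecture to compare your proposal against.

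\textbf{On the proposal itself.} Your two-range strategy is sound as far as it goes and reproduces the paper's motivation correctly: the low range $m\le 3d-2(a+b+c)$ is controlled by Remark~\ref{posch}, and the high range must come from the $\mathcal D$-module/pushforward computation of Section~4. You also correctly identify the genuine gap: the reduction $M(X)\rightsquigarrow j_*\Omega$ (and the replacement of $(j_*\Omega)_{\mathfrak p}$ by $j_*\Omega_{\mathfrak p}$) is carried out in the paper under an unquantified ``sufficiently large $p$,'' and proving the sharp threshold $p>2d-a-b-c$ would require controlling the $\delta_0$-supported constituents of $M(X)$ (and the commutation of $j_*$ with reduction mod~$p$) at that threshold. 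You propose a route but do not close it, so as a proof this is incomplete --- which is unsurprising given the statement is an open conjecture. Two smaller caveats: the coprimality of $p$ with $d$ and $\mathrm{LCM}(a,b,c)$, which the Section~4 argument needs, is \emph{not} implied by $p>2d-a-b-c$ when $a+b+c>d$ (e.g.\ it fails automatically only because one then checks low-lying primes separately), so this is not merely a ``case-by-case check on weight triples'' but an honest extra hypothesis that must be folded into the conjectured bound or argued away. Also, the ``In particular'' in the conjecture statement is misleading: specializing $p>2d-a-b-c$ to the Kleinian weights gives $p>2h-2$, whereas the claimed Kleinian threshold $p>h$ is \emph{sharper} and really comes from Conjecture~\ref{lowbound2}; your framework as written would only deliver $p>2h-2$ for the Kleinian case.

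\textbf{What the paper actually does for Kleinian.} Theorem~\ref{kleini} avoids the $\mathcal D$-module constituent analysis entirely. It works directly with differential forms $\Omega^j$ on $X^\circ = X\setminus 0$ and their subspaces $\overline{\Omega^j}$ restricted from $\mathbb A^3$, proving in Lemma~\ref{le1} that $\Omega^1/\overline{\Omega^1}\cong J(Q)$, that $\mathbf d$ is surjective onto $\Omega^2/\overline{\Omega^2}$, and that $\ker(\mathbf d\colon\Omega^0\to\Omega^1)\subset\overline{\Omega^1}$, all by elementary degree bounds exploiting $\deg J(Q)\le 2h-4 < 2p-2$. This gives the sharp $p>h$ bound directly. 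Your second, fallback suggestion (``a Koszul-type direct computation of $\{A,A\}[m]$'') is in the same spirit, and is the approach that actually succeeds in the one proved case; if you want to make progress on the general conjecture, that elementary route --- rather than tightening the $\mathcal D$-module constituent analysis --- is the one the paper itself validates.
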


\begin{remark}
The motivation for this conjecture is that for $p>2d-a-b-c$, the degrees 
occurring in the second summand of formula (\ref{mainform})
are greater than the degrees in the first summand, so the corresponding spaces ``don't interact''. 
If $p<2d-a-b-c$, the formula sometimes fails for a trivial reason: 
some of the coefficients $b_m$ for small $m$ 
are bigger than $\dim A[m]$.  
\end{remark}

Note that we have the following weaker statement, which holds for all $p$. 

\begin{proposition} In the above setting, 
formula (\ref{mainform}) holds for all $p$ 
up to adding a polynomial. 
\end{proposition}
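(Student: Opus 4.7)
The plan is to retrace the proof of Theorem \ref{th1}, isolating those steps that genuinely rely on $p$ being large; each such step contributes at most a polynomial correction to the Hilbert series. Since $H_{A,\mathfrak{p}}(t)$ is rational by Lemma \ref{fg}(i) and the right-hand side of (\ref{mainform}) is manifestly rational, the claim is equivalent to the coefficient-wise difference vanishing in all sufficiently high degrees.

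First I would rerun the $\mathcal{D}$-module argument verbatim. The identification $HP_0(A_\mathfrak{p}) = \pi_0(M(X)_\mathfrak{p})$, the decomposition of $M(X)_\mathfrak{p}$ as $j_*\Omega_\mathfrak{p}$ plus finitely many $\delta_0$-constituents, the canonical connection on $\mathcal{L}^{\otimes pk}$ over $C_X$, and the identification $\Omega_{C_X}\simeq \mathcal{L}^{d-a-b-c}$ all make sense in arbitrary characteristic. The $\delta_0$-pieces contribute a polynomial to $H_{A,\mathfrak{p}}(t)$; modulo such, one obtains $b_k = a_k - c_{pk}$ for $k\geq 1$, exactly as in Theorem \ref{th1}.

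Next I would analyze the combinatorial step. For $p$ coprime to $LCM(a,b,c)$, Lemma \ref{indepp}(ii) applies: since $g_-(z^{-1})$ is a polynomial of degree at most $d-a-b-c$, the correction term $s_{pk}$ vanishes once $pk$ exceeds this bound, so $b_k = [z^k]f(z)$ for all but finitely many $k$. For $p$ dividing $d$, Remark \ref{posch} fails at additional degrees (because the solvability argument in Proposition \ref{jacring}, which divides by $d^{-1}(a+b+c+n)$, breaks modulo $p$), but only in a bounded range of degrees, each failure contributing a polynomial correction.

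The main obstacle is the case $p\mid LCM(a,b,c)$: here the Galois-invariance step in the proof of Lemma \ref{indepp}(ii) breaks because $\zeta\mapsto \zeta^p$ is no longer a bijection on the relevant roots of unity, and a priori the difference $\sum_k(c_k-c_{pk})z^k$ could be a genuinely non-polynomial rational function, leaving a periodic discrepancy. To close this gap, one must argue that the $\mathcal{D}$-module computation on the stacky curve $C_X$ in characteristic $p$ undergoes a matching modification: the differences between $\dim H^0(C_X,\mathcal{L}^{\otimes pk})$ and $\dim H^0(C_X,\mathcal{L}^{\otimes pk}\otimes \Omega)$ and their char-zero counterparts shift $b_k$ by a quantity whose generating series exactly cancels the periodic error, leaving only a polynomial discrepancy as claimed.
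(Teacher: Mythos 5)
Your proposal takes a fundamentally different route from the paper and, as you yourself acknowledge in the last paragraph, it contains a genuine gap. You leave the case $p\mid \mathrm{LCM}(a,b,c)$ unresolved, offering only a hope that some cancellation on the stacky curve ``exactly cancels the periodic error'' --- but this is precisely the kind of claim a proof is supposed to establish, not assume.

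There is also a more structural problem in the plan, before one even reaches the combinatorics. You assert that ``the decomposition of $M(X)_\mathfrak{p}$ as $j_*\Omega_\mathfrak{p}$ plus finitely many $\delta_0$-constituents'' makes sense in arbitrary characteristic. It does not. In the proof of Theorem~\ref{th1}, the decomposition of $M(X)$ into an extension of $IC_X$ by finitely many copies of $\delta_0$ is carried out over the characteristic-zero field $F$, where holonomicity implies finite length; the reduction $(j_*\Omega)_\mathfrak{p}$ then enters afterwards, and the ``sufficiently large $p$'' hypothesis is used to compare it to $j_*\Omega_\mathfrak{p}$. In the setting of the Proposition one works directly over a field $\mathbb{F}$ of characteristic $p$, with no characteristic-zero lift in play, and holonomic $\mathcal{D}$-modules in characteristic $p$ (defined by Lagrangian singular support) are typically of \emph{infinite} length, so the composition-series argument is unavailable. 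Retracing the proof of Theorem~\ref{th1} is therefore not a viable strategy here.

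The paper's argument sidesteps all of this. It observes that $HP_0(A)$ is a finitely generated graded module over the subring $A^p$, and that the $\mathcal{D}$-module description of $HP_0$ (valid in characteristic $p$) identifies the corresponding sheaf, away from the singular point of the Frobenius twist $X'$, with the canonical sheaf $\Omega(X')$. The latter is a free $A^p$-module of rank one generated by the symplectic form, whose Hilbert series is (up to the degree shift by $\deg\omega=a+b+c-d$) exactly $g(t^p)$. Since two finitely generated graded modules on $X'$ agreeing outside the cone point have Hilbert series differing by a polynomial, the formula follows for all $p$ at once --- with no case analysis on divisibility of $p$ and no use of the combinatorial Lemma~\ref{indepp} whatsoever. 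This is the key idea your proposal is missing.
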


\begin{proof}
Consider $HP_0(A)$ as a module over $A^p$. 
Using the ${\mathcal D}$-module interpretation of $HP_0(A)$ from 
\cite{ES3} (which is valid in characteristic $p$), 
we see that outside of the singularity, 
$HP_0(A)$ is the canonical sheaf $\Omega(X')$ on the Frobenius twist of $X$. 
Now, $\Omega(X')$ is isomorphic to $A^p$ by multiplication by the symplectic
for $\omega=\frac{{\bold d}x\wedge {\bold d}y\wedge {\bold d}z}{{\bold d}Q}$, 
which has degree $a+b+c-d$. So the Hilbert series 
of $\Omega(X')$ is $g(t^p)$, 
where $g(z)$ is as in Theorem \ref{th1},
and $h_{HP_0(A)}(t)$ differs from this function by adding a polynomial
(since the two modules are the same outside of the singular point).  
\end{proof}

\subsection{Quotient singularities}

Let $V$ be a symplectic vector space over a field $\Bbb F$ 
of characteristic $p$, and 
$G$ a finite group of order coprime to $p$ 
acting faithfully on $V$. Let $A$ be the algebra of regular functions 
on $V/G$. 

\begin{conjecture}\label{lowbound2}
Formula (\ref{mainform1}) holds for $h_{A/\lbrace{A,A\rbrace}}(t)$
if $p>\frac{1}{2}D+1$, where $D$ is the maximal degree
of a nonzero element in $HP_0(\Bbb F[\overline{V^K}]^K)$ for $K\in S$. 
\end{conjecture}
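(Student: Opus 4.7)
The plan is to make the large-$p$ asymptotic of Corollary \ref{cor2} effective by combining a direct lift of the \cite{ES3} $\mathcal{D}$-module decomposition with a degree-separation argument tailored to the two ``phases'' of formula (\ref{mainform1}).

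First, I would revisit the decomposition of Theorem 4.13 of \cite{ES3},
$$M_\phi(V)=\oplus_{K\in S}(B_K/\{A_K,B_K\})\otimes \delta_{V^K},$$
and verify that it descends to the reduction modulo $\mathfrak{p}$ whenever $p\nmid |G|$. The argument in \cite{ES3} is a stratum-by-stratum application of Kashiwara's theorem combined with $G$-averaging via the Reynolds operator; both ingredients remain valid in characteristic $p$ coprime to $|G|$, provided one fixes an integral $R$-form of the data so that the relevant lattices are flat. Applying $\pi_0$, taking $G$-invariants, and identifying $\pi_0(\delta_{V^K_{\mathfrak{p}}})$ with $H^{\dim V^K}(V^K_{\mathfrak{p}})$ via the Cartier operator produces a canonical map from the right-hand side of (\ref{mainform1}) to $HP_0(A_{\mathfrak{p}})$, contributing the factor $t^{(p-1)\dim V^K}\eta_\pi(t^p)$ exactly as in Corollary \ref{cor2}.

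The bound $p>\tfrac{1}{2}D+1$ then emerges from a degree-separation argument. The summands in (\ref{mainform1}) split into two families according to whether $V^K$ is zero or not. For $K$ with $V^K=0$, $\eta_\pi(t^p)$ reduces to the constant $\dim\pi^{N(K)}$, so the contribution is supported in degrees $\leq D$. For $K$ with $V^K\neq 0$, the fact that $K\subset Sp(V)$ forces $V^K$ to be a symplectic subspace (its symplectic complement is the $K$-averaged orthogonal of $\overline{V^K}$), hence $\dim V^K$ is even and at least $2$; the corresponding contribution is supported in degrees $\geq (p-1)\dim V^K\geq 2p-2$. The hypothesis $p>\tfrac{1}{2}D+1$ is equivalent to $2p-2>D$, which makes these two degree-ranges disjoint.

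The hard part will be showing that, in this range, the canonical morphism above is an isomorphism in every graded degree, with no extra ``Frobenius-type'' classes appearing in $M_\phi(V_{\mathfrak{p}})$ beyond those inherited from characteristic zero and no collapsing of characteristic-zero classes upon reduction. In degrees $\leq D$, flatness of the $R$-lattice on $HP_0(A_R)$ combined with the degree-separation above would rule out both phenomena, since any hypothetical extra contribution would have to live in degrees $\geq 2p-2$. In degrees $\geq 2p-2$ the formula predicts free modules over $A_{\mathfrak{p}}^p$ built from top de Rham cohomologies of the strata, and I would match this against $HP_0(A_{\mathfrak{p}})$ by combining the fiber-dimension bound of Lemma \ref{fg}(ii) with the stratum-by-stratum description of $M_\phi(V_{\mathfrak{p}})$. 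The essential technical point, and the main obstacle, is to exclude characteristic-$p$ indecomposable $\mathcal{D}$-module constituents supported on a proper stratum that lift no characteristic-zero class but nevertheless survive to $\pi_0$; the degree-separation reduces this to an equality of Hilbert series at the level of each parabolic stratum, which should be tractable by induction on the stratification order.
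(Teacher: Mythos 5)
The statement you are proving is labeled a \emph{conjecture} in the paper; the authors give only a motivating remark and prove the special case of Kleinian singularities (Theorem~\ref{kleini}) by a direct, elementary analysis of the de Rham complex of $X\setminus 0$, not by your stratification strategy. There is no general proof in the paper to compare against, so a complete argument along your lines would be new.

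Your degree-separation arithmetic is correct and matches the paper's heuristic for the companion Conjecture~\ref{lowbound1}: for $K\in S$ with $V^K\neq 0$ one has $\dim V^K\geq 2$, so that stratum contributes to the right-hand side of (\ref{mainform1}) only in degrees $\geq 2(p-1)$, the $K=G$ stratum contributes only in degrees $\leq D$, and $p>\tfrac12 D+1$ is exactly $2(p-1)>D$. But this only explains why the threshold is natural; it does not prove the formula. The genuine gap is the one you flag yourself: you must show that for \emph{every} $p$ above the threshold (not just $p\gg 0$) the reduction of the ES3 decomposition has the stated $\pi_0$ stratum-by-stratum, and separately that $\dim HP_0(A_{\mathfrak{p}})[m]=\dim HP_0(A)[m]$ for $m\leq D$. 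The latter amounts to $HP_0(A_R)[m]$ being ${\mathfrak{p}}$-torsion-free for $m\leq D$ whenever $p>\tfrac12 D+1$, which is an effective arithmetic statement; your appeal to ``flatness of the $R$-lattice on $HP_0(A_R)$'' presupposes it rather than proves it, and degree-separation gives no purchase here (it only says the stratum contributions to the right-hand side occupy disjoint ranges, not that the left-hand side agrees with the right-hand side in either range). The proposed induction on the stratification order to exclude extra characteristic-$p$ constituents is asserted as ``tractable'' but not carried out, and one should also note that the ES3 proof of the decomposition uses finite length of holonomic $\mathcal D$-modules, which fails in characteristic $p$; ``reducing the decomposition'' therefore means fixing an $R$-lattice whose good-reduction locus is not visibly controlled by $D$. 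These are exactly the missing ingredients that leave the statement a conjecture.
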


\begin{remark}
In the case of Kleinian singularities, $D=2(h-2)$, so 
the condition is $p>h-1$, i.e. $p\ge h$. 
This is equivalent to the condition $p>h$, since 
the only case when $h$ is a prime is type $A$, and in this case $h$ is 
the order of $G$, while $p$ must be relatively prime to the order of $G$. 
\end{remark}

\subsection{Kleinian singularities}

\begin{theorem}\label{kleini}
Conjectures \ref{lowbound1} and \ref{lowbound2} 
hold for Kleinian singularities. In other words, formula 
(\ref{mainform}) holds if $p>h$. 
\end{theorem}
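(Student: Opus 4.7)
The plan is to follow the proof of Theorem \ref{th1} specialized to the Kleinian case and verify that every step is valid under the hypothesis $p > h$. The preliminary observation, which explains why $p > h$ (rather than the generic bound $p > 2d - a - b - c = 2h - 2$) suffices for Kleinian, is that $a + b + c - d = 2$, so $g(z)$ starts at $z^2$ and hence the second summand $t^{-2} f(t^p)$ of (\ref{mainform}) is supported in degrees $\geq 2p - 2$, while the first summand (the Jacobi ring) has top degree $2h - 4$; the non-interaction inequality $2p - 2 > 2h - 4$ is equivalent to $p \geq h$, and we need $p > h$ to also ensure $p$ avoids divisors of $|\Gamma|$.

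The arithmetic conditions of Remark \ref{posch} and Lemma \ref{indepp}(ii) are easily checked under $p > h$: $p$ must be coprime to $d = 2h$ and to $LCM(a,b,c)$, and a short case check over the ADE types shows that $LCM(a,b,c)$ divides $2h$ (for instance in type $A_n$ one has $(a,b,c) = (2, n+1, n+1)$, so $LCM \in \{h, 2h\}$). Thus Remark \ref{posch} applies and immediately gives $HP_0(A_\mathfrak{p})[m] = J(Q)[m]$ for all $m \not\equiv -2 \pmod p$, matching the first summand of formula (\ref{mainform}) in those degrees.

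For the remaining degrees $m = pk - 2$, $k \geq 1$, I would invoke the D-module computation from the proof of Theorem \ref{th1}, giving $\dim HP_0(A_\mathfrak{p})[pk-2] = b_k = a_k - c_{pk}$, where $a_l$ and $c_l$ are the $z^l$-coefficients of $h_A(z)$ and $u(z) = (1-z^2) h_A(z)$, respectively. Since $v(z) = 1$ for Kleinian (as $g_- = g_0 = 0$), Lemma \ref{indepp}(ii) collapses to $c_{pk} = c_k$; combined with $c_l = a_l - a_{l-2}$ this yields $b_k = a_{k-2}$ (with the convention $a_{-1} = 0$). In particular $b_1 = 0$, which handles the single potentially dangerous low degree $m = p - 2$ (where the formula predicts zero, as $p-2$ is odd and strictly less than $2p-2$). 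Summing over $k \geq 2$ gives $\sum_{k \geq 2} a_{k-2} t^{pk-2} = t^{2p-2} h_A(t^p) = t^{2p-2}(1 + t^{ph})/((1-t^{pa})(1-t^{pb}))$, exactly the second summand of (\ref{mainform}).

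The main obstacle is justifying that the D-module steps in the proof of Theorem \ref{th1} --- the identification $M(X)_\mathfrak{p} = j_* \Omega_\mathfrak{p}$ up to $\delta_0$-constituents, the Frobenius descent producing the canonical connection on $\mathcal{L}^{\otimes pk}$, and the Cartier-type computation of $\xi_* \Omega_\mathfrak{p}$ on the weighted projective curve $C_X$ --- remain valid under the modest assumption $p > h$ rather than merely for ``sufficiently large'' $p$. Concretely, one must show that the $\delta_0$-constituents at the origin contribute only to degrees $\leq 2h - 4$ (where they are absorbed by the Jacobi-ring summand) and in particular not to any degree of the form $pk - 2$; this should follow from the explicit structure of $M(X)$ as an extension of $IC_X$ by copies of $\delta_0$ whose socle degrees are bounded by $2h - 4$, a fact provable from the known presentation of $M(X)$ for Kleinian singularities.
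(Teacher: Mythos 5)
Your proposal takes a genuinely different route from the paper's, and it contains a real gap. You propose to retrace the proof of Theorem \ref{th1} and make the ``sufficiently large $p$'' hypothesis effective for Kleinian singularities. The combinatorics you carry out are correct --- since $g_- = g_0 = 0$ one has $v(z)=1$, Lemma \ref{indepp}(ii) collapses to $c_{pk}=c_k$, hence $b_k = a_{k-2}$, reproducing the summand $t^{2p-2}h_A(t^p)$ --- but the reduction to combinatorics is exactly what you never establish. The proof of Theorem \ref{th1} discards the $\delta_0$-constituents of $M(X)$, replaces $(j_*\Omega)_{\mathfrak{p}}$ by $j_*\Omega_{\mathfrak{p}}$, and performs the Cartier/Frobenius-descent computation of $\xi_*\Omega_{\mathfrak{p}}$ on the weighted projective curve $C_X$, invoking ``large $p$'' or ``large positive degree'' at each of these steps. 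You acknowledge this in your final paragraph, and then simply assert that the $\delta_0$-constituents sit in degrees $\leq 2h-4$, ``a fact provable from the known presentation of $M(X)$,'' without supplying that presentation or the bound. This is the heart of the theorem --- making the bound effective --- and as written your argument reduces the claim to an unproved assertion about the socle of $M(X)$. (Also, your side remark that $\mathrm{LCM}(a,b,c)$ divides $2h$ is false already for $D_5$, where $(a,b,c)=(4,6,8)$, $h=8$, $\mathrm{LCM}=24$; what is true and what you need is just that $p>h\geq a,b,c$ forces coprimality.)

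The paper's own proof is completely different and avoids the $D$-module machinery, precisely to sidestep the ``sufficiently large $p$'' issue. It handles type $A$ by a direct computation with the generators $uv$, $u^n$, $v^n$. For types $D$ and $E$, it works with the spaces $\Omega^j(X^\circ)$ of $j$-forms on the punctured singularity and the subspaces $\overline{\Omega^j}$ of forms restricted from $\mathbb{A}^3$. A Koszul-complex argument (Lemma \ref{le1}) shows $\mathrm{Im}\,\mathbf{d} + \overline{\Omega^2} = \Omega^2$, produces a graded isomorphism $\Omega^1/\overline{\Omega^1}\cong J(Q)$, and shows $\ker(\mathbf{d}\colon A\to\Omega^1)\subset\overline{\Omega^1}$; the hypothesis $p>h$ enters only via the elementary degree inequality $2p > 2h-2$ when checking that Cartier-operator contributions have degree too large to lie in $\overline{\Omega^1}$ or $\overline{\Omega^2}$. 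From these facts the Hilbert series of $\Omega^2/\mathbf{d}\,\overline{\Omega^1}\cong HP_0(A)$ is read off directly, with a fully explicit degree count and no appeal to Theorem \ref{th1}.
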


\begin{proof}
The type $A_{n-1}$ case is easy. In this case, 
$A$ is the subalgebra of $\Bbb F[u,v]$ 
(with the generator of $\Gamma=\Bbb Z_n$ acting on $u,v$ by
$(u,v)\mapsto (\zeta u,\zeta^{-1} v)$, where $\zeta$ is a 
primitive $n$-th root of unity in $\Bbb F$)  
generated by $uv$, $u^n$, and $v^n$. So,  
if $p>n$, then it is easy to show by a direct calculation that 
$$
A/\lbrace{A,A\rbrace}=\oplus_{m=0}^{n-2} \Bbb F(uv)^m\oplus (uv)^{p-1}\Bbb F[u^{pn},v^{pn},(uv)^p],
$$
which implies the required statement. 
So let us consider the D and E cases, for which the representation $L$ of $\Gamma$ is irreducible. 

Let $\Omega^j=\Omega^j(X^\circ)$ be the space of differential $j$-forms 
on $X^\circ=X \setminus 0$, where $X=L/\Gamma$ is a Kleinian singularity ($0\le j\le 2$). 
Let $\overline{\Omega^j}\subset \Omega^j$ be the subspace of those forms which 
are obtained by restriction from $\Bbb A^3$. 

It is clear that $\Omega^0=A$ (since any regular function on $L\setminus 0$ extends to $L$). 
Also, we have a natural isomorphism $\xi: A\to \Omega^2$ given by $f\mapsto f\omega$. 
The space $\overline{\Omega^2}$ is by definition spanned by elements of the form 
$f_1{\bold d}f_2\wedge {\bold d}f_3$, where $f_1,f_2,f_3\in A$. 
Thus, the preimage $\xi^{-1}(\overline{\Omega^2})$ is spanned by elements 
$f_1\lbrace{f_2,f_3\rbrace}$, i.e. $\xi^{-1}(\overline{\Omega^2})=A\lbrace{A,A\rbrace}=(Q_x,Q_y,Q_z)\subset A$. 
So, $\Omega^2/\overline{\Omega^2}=J(Q)$, the Jacobi ring of $Q$. 

Also, we have the differential ${\bold d}: \Omega^1\to \Omega^2$. By working on $L$, we see that the cokernel of ${\bold d}$ 
consists of the forms $f^p\omega_p$, where $f\in A$, and $\omega_p$ is the image of $\omega$ under the Cartier operator;
if $u,v$ are linear coordinates on $L$ such that $\omega={\bold d}u\wedge {\bold d}v$ then 
$\omega_p=(uv)^{p-1}{\bold d}u\wedge {\bold d}v$. 

\begin{lemma}\label{le1} (i) We have ${\rm Im}{\bold d}+\overline{\Omega^2}=\Omega^2$. 

(ii) There is a natural graded isomorphism $\Omega^1/\overline{\Omega^1}\cong J(Q)$. 

(ii) The kernel $K$ of the map ${\bold d}: A=\Omega^0\to \Omega^1$ is contained in $\overline{\Omega^1}$. 
\end{lemma} 

\begin{proof} (i) Since $J(Q)$ sits in degrees between $0$ and $2h-4$ and $\deg(\omega)=2$, 
the maximal degree of an element of $\Omega^2/\overline{\Omega^2}$ is $2h-2$.
On the other hand, the degree of $f^p\omega_p$ is $\ge 2p$, which is bigger than $2h-2$, proving (i). 

(ii) Consider the embedding $\eta: A\to \Omega^1$ given by $\eta(f)=f(u{\bold d}v-v{\bold d}u)$. 

We claim that $\eta(A)+\overline{\Omega^1}=\Omega^1$. Indeed, we have a map $\tau: \Omega^1\to A$ given by 
$\tau (\alpha {\bold d}u+\beta {\bold d}v)=\alpha u+\beta v$, whose kernel is $\eta(A)$ (by exactness 
of the Koszul complex). The image of this map is the augmentation ideal $I$ in $A$, since 
$x=a^{-1}\tau({\bold d}x)$, $y=b^{-1}\tau({\bold d}y)$, and $z=c^{-1}\tau({\bold d}z)$ (where $c=h$). 
For the same reason $\tau(\overline{\Omega^1})=I$, which implies the statement. 

Next, we claim that $\eta((Q_x,Q_y,Q_z))\subset \overline{\Omega_1}$. 
We will show that $\eta(Q_x)\in \overline{\Omega_1}$; the case of $Q_y$ and $Q_z$ is similar. 
We have 
$$
\eta(Q_x)=\eta(\lbrace{y,z\rbrace})=(y_uz_v-y_vz_u)(u{\bold d}v-v{\bold d}u)=
by{\bold d}z-cz{\bold d}y,
$$
as desired. 

This shows that $\tau$ gives rise to a surjective linear map 
$J(Q)\to \Omega^1/\overline{\Omega^1}$. On the other hand, by (i), we 
have a surjective linear map ${\bold d}: \Omega^1/\overline{\Omega^1}\to \Omega^2/\overline{\Omega^2}=J(Q)$.
This means that both maps are isomorphisms, which proves (ii).  

(iii) By working on $L$, we see that the kernel $K$ is spanned by elements ${\bold d}f$ for $f\in A$ and 
$\alpha^pu^{p-1}{\bold d}u+\beta^pv^{p-1}{\bold d}v$ for $\alpha,\beta\in \Bbb F[L]$, so that $\alpha{\bold d}u+\beta{\bold d}v$ 
is $\Gamma$-invariant (this is just the image of $\alpha{\bold d}u+\beta{\bold d}v$ 
under the Cartier operator). By definition, ${\bold d}f\in \overline{\Omega^1}$.
To see that the second element is also in $\overline{\Omega^1}$, note that it has degree at least $2p$, 
while by (ii), the degree of any homogeneous element of $\Omega^1$ that is not in 
$\overline{\Omega^1}$ is at most $2h-2$, which is smaller. This proves (iii).       
\end{proof}

Lemma \ref{le1} implies that the graded space $\Omega^2$ has a
graded subspace $\Omega^1/K$, which in turn has a graded subspace  
is $\overline{\Omega^1}/K$. This implies that the Hilbert series 
of $\Omega^2/{\bold d}\overline{\Omega^1}$ is 
the sum of the Hilbert series of the corresponding quotients,
i.e., $h_{J(Q)}(t)+t^{2p-2}h_A(t^p)$, which is the right hand side of (\ref{mainform}). 
On the other hand, it easy to see that $\xi^{-1}({\bold d}\overline{\Omega^1})=\lbrace{A,A\rbrace}$, 
so $\tau$ identifies $\Omega^2/{\bold d}\overline{\Omega^1}$ with $HP_0(A)$. 
Theorem \ref{kleini} is proved. 
\end{proof}

\end{document}